\documentclass[11pt]{amsart}

\usepackage[margin=1in]{geometry}
\usepackage{amsmath,amssymb,amsthm}
\usepackage{enumitem}
\usepackage{xcolor}
\usepackage{tikz}
\usepackage[colorlinks=true,linkcolor=blue!55!black,citecolor=blue!55!black,urlcolor=blue!55!black]{hyperref}
\usepackage[nameinlink,capitalize]{cleveref}
\usepackage{microtype}

\newtheorem{theorem}{Theorem}[section]
\newtheorem{proposition}[theorem]{Proposition}
\newtheorem{lemma}[theorem]{Lemma}
\newtheorem{corollary}[theorem]{Corollary}
\theoremstyle{definition}
\newtheorem{example}[theorem]{Example}
\newtheorem{remark}[theorem]{Remark}

\newcommand{\reg}{\operatorname{reg}}
\newcommand{\cl}{\operatorname{cl}}
\newcommand{\ex}{\operatorname{ex}}
\newcommand{\conv}{\operatorname{conv}}
\newcommand{\m}{\mathfrak m}
\newcommand{\NN}{\mathbb N}
\newcommand{\ZZ}{\mathbb Z}
\newcommand{\kk}{\mathbb K}
\newcommand{\cQ}{\mathcal Q}
\newcommand{\cL}{\mathcal L}
\newcommand{\cI}{\mathcal I}
\newcommand{\doi}[1]{\href{https://doi.org/#1}{doi:\nolinkurl{#1}}}

\title{Regularity of Symbolic Powers of Co-Chordal Edge Ideals}

\author{Chwas Ahmed\textsuperscript{1,2}}
\address{\textsuperscript{1} Department of Mathematics, College of Science, University of Sulaimani, Sulaimani, Kurdistan Region, Iraq.}
\address{\textsuperscript{2} Department of Mathematics and Natural Sciences, American University of Iraq, Sulaimani, Kurdistan Region, Iraq.}
\email{chwas.ahmed@univsul.edu.iq}

\author{Mohammed Rafiq Namiq\textsuperscript{1}}
\address{\textsuperscript{1} Department of Mathematics, College of Science, University of Sulaimani, Sulaimani, Kurdistan Region, Iraq.}
\email{mohammed.namiq@univsul.edu.iq}

\subjclass[2020]{Primary 13F55; Secondary 05E40, 13D02, 05C25.}
\keywords{Edge ideal, symbolic power, Castelnuovo--Mumford regularity, componentwise linearity, co-chordal graph, clique tree, convex geometry.}
\begin{document}

\begin{abstract}
Let $G$ be a finite simple co-chordal graph with at least one edge, and let $I(G)$ be its edge ideal. Over an arbitrary field, we prove that the symbolic powers of $I(G)$ satisfy
\[
\reg I(G)^{(s)}=2s
\qquad\text{for every }s\ge1.
\]
Thus every symbolic power of $I(G)$ has a degree resolution. Using Takayama's formula and clique trees, we reduce the regularity problem to a topological one. Finite convex geometry then shows that nontrivial homology implies a suitable set of leaves, and a weighted counting argument gives the required regularity formula. Finally, we show that symbolic powers of co-chordal edge ideals are not necessarily componentwise linear.
\end{abstract}

\maketitle

\section{Introduction}

Let $\kk$ be a field and let $S=\kk[x_1,\ldots,x_n]$ be a standard graded polynomial ring. For a nonzero finitely generated graded $S$-module $M$, write $\beta_{i,j}(M)=\dim_\kk \operatorname{Tor}_i^S(M,\kk)_j$ for its graded Betti numbers. The Castelnuovo--Mumford regularity of $M$ is
\[
\reg M=\max\{j-i:\beta_{i,j}(M)\ne0\}.
\]

For a homogeneous ideal $I\subseteq S$, classical results show that the regularity of the ordinary powers $I^s$ is eventually linear in $s$ \cite{CutkoskyHerzogTrung,Kodiyalam}. Symbolic powers can behave differently. For monomial ideals, the regularity of symbolic powers is eventually quasi-linear and need not be eventually linear \cite{DungHienNguyenTrung,HerzogHibiTrung}. This leads naturally to the problem of determining classes of combinatorially defined ideals for which the regularity of symbolic powers has a particularly simple form.

Let $G$ be a finite simple graph on $[n]$, and let
\[
I(G)=(x_ix_j:\{i,j\}\in E(G))
\]
be its edge ideal \cite{villareal}. Since $I(G)$ is squarefree, its symbolic powers are determined by the minimal vertex covers of $G$. The regularity of symbolic powers of edge ideals has been studied for several classes of graphs, including bipartite, unicyclic, chordal, Cameron--Walker, and cubic circulant graphs \cite{FakhariCameronWalker,FakhariUnicyclic,FakhariChordal,HangPhamVu,SimisVasconcelosVillarreal}.

In this paper, we consider co-chordal graphs, that is, graphs whose complements are chordal. This class is especially natural from the viewpoint of regularity. By Fr\"oberg's theorem, $I(G)$ has a $2$-linear resolution if and only if $G^c$ is chordal \cite{Froberg}. Herzog, Hibi, and Zheng proved that all powers of a monomial ideal with a $2$-linear resolution also have linear resolutions \cite{HHZ}. Consequently, if $G$ is co-chordal and has at least one edge, then
\[
\reg I(G)^s=2s
\qquad\text{for every }s\ge1.
\]
It is therefore natural to ask whether the symbolic powers have the same regularity.

Several results are closely related to this question. As a consequence of his formula for chordal graphs, Seyed Fakhari obtained $\reg I(G)^{(s)}=2s$ for every split graph with at least one edge \cite[Theorem~3.3]{FakhariChordal}, since split graphs are both chordal and co-chordal and have induced matching number one. For co-chordal graphs, he also proved the same equality for $s=2,3,4$ \cite{FakhariSmall}. Ficarra, Moradi, and R\"omer established the equality for complements of block graphs and showed that the largest degree of a minimal generator of $I(G)^{(s)}$ is $2s$ for every co-chordal graph \cite{FMR}. The latter result gives no corresponding bound on higher syzygies and therefore does not suffice to determine $\reg I(G)^{(s)}$ for arbitrary $s$.

Our main result gives the exact value for every symbolic power of an arbitrary co-chordal edge ideal.

\begin{theorem}\label{thm:1}
Let $G$ be a finite simple co-chordal graph with at least one edge. Then, over every field,
\[
\reg I(G)^{(s)}=2s
\qquad\text{for every }s\ge1.
\]
\end{theorem}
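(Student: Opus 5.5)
The lower bound is immediate. By the result of Ficarra, Moradi, and R\"omer, $I(G)^{(s)}$ has a minimal generator of degree $2s$; concretely, $(x_ix_j)^s$ is a minimal generator for any edge. Hence $\reg I(G)^{(s)}\ge 2s$, and it suffices to prove $\reg I(G)^{(s)}\le 2s$, or equivalently $\reg S/I(G)^{(s)}\le 2s-1$.

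For the upper bound we will use Takayama's formula. Write $I^{(s)}=\bigcap_{C}P_C^{s}$, where $C$ runs over the minimal vertex covers of $G$. Because $G^c$ is chordal, the minimal vertex covers are exactly the complements of the maximal cliques $Q_1,\dots,Q_r$ of $G^c$, so $P_C^s$ is the ideal of monomials with $\sum_{i\notin Q_k}a_i\ge s$. Takayama's formula expresses $\dim_\kk H^i_\m(S/I^{(s)})_{\mathbf a}$, for $\mathbf a\in\ZZ^n$, as the reduced homology of a degree complex $\Delta_{\mathbf a}$. Set $G_{\mathbf a}=\{j:a_j<0\}$. Then $\Delta_{\mathbf a}$ consists of the faces $F\supseteq G_{\mathbf a}$ such that $\mathbf a^+$, after the coordinates in $F$ are killed, still lies outside $I^{(s)}$. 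In terms of the cliques, the facets are indexed by the cliques $Q_k$ with $\sum_{i\notin Q_k}a_i^+\le s-1$. The facet for $Q_k$ is the subset $Q_k$ restricted appropriately, and $\Delta_{\mathbf a}$ is a simplicial complex on $[n]$ whose facets are among the $Q_k$ (up to the $G_{\mathbf a}$ constraint). The regularity is $\max\{|\mathbf a|+i\}$ over $\mathbf a$ with $\tilde H_{i-|G_{\mathbf a}|-1}(\Delta_{\mathbf a})\ne0$. We therefore need to show that nontrivial homology forces $|\mathbf a^+|+i\le 2s-1$.

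Next comes the topological reduction. Fix a clique tree $T$ of the chordal graph $G^c$; its vertices are the $Q_k$, and each vertex set $\{k: v\in Q_k\}$ induces a subtree. The relevant complex is generated by a subfamily $\mathcal Q_{\mathbf a}$ of cliques, and by the nerve lemma it is homotopy equivalent to the nerve of $\mathcal Q_{\mathbf a}$. Intersections of clique-tree subfamilies behave like convex sets: the family $\{Q_k : \sum_{i\notin Q_k}a_i\le s-1\}$ is a sublevel set of a function on $T$. Using the convex geometry of subtrees of a tree (an antimatroid/convex geometry whose extreme points are leaves), I expect to show the following. If the complex has nonzero $\tilde H_{j}$, then there are $j+2$ cliques in $\mathcal Q_{\mathbf a}$ that are leaves of the subtree they span. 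Each such clique $Q$ contains a private vertex $v_Q$ outside all the others, and in fact outside the union of cliques on the far side. This is the main obstacle: we have to extract, from nonvanishing homology of a union of clique-tree subfamilies, a set of ``leaf'' cliques with private vertices. I would first try Alexander duality or a link-deletion induction on the clique tree: removing a leaf clique that is not extreme in $\mathcal Q_{\mathbf a}$ is a collapse.

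The last step is weighted counting. Given $j+2$ leaf cliques $L_0,\dots,L_{j+1}$ in $\mathcal Q_{\mathbf a}$, each satisfies $\sum_{i\notin L_t}a_i^+\le s-1$. We sum these inequalities with suitable weights. Every vertex lies outside at least all but one (or all but a few) of the leaf cliques, since leaves pairwise share only separator vertices. A private vertex of $L_t$ with positive $a$-value is counted in the other $j+1$ inequalities. Careful averaging, together with the sign constraints forced by $G_{\mathbf a}$ (vertices with $a_j<0$ lie in every facet), should give $|\mathbf a^+|\cdot\frac{j+1}{j+2}\lesssim s-1$ adjusted so that $|\mathbf a|+i\le 2s-1$. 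Here $i=j+1+|G_{\mathbf a}|$ and $|\mathbf a|\le|\mathbf a^+|-|G_{\mathbf a}|$. I would check the boundary case $j=0$ (two leaves, giving $|\mathbf a^+|\le 2s-2$ and $i=1$) first to calibrate the weights, and then generalize.
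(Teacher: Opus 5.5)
Your outline has the same architecture as the paper: Takayama's formula, a clique tree of $G^c$, the nerve over admissible cliques, tree convexity with leaves as extreme points, and a weighted count. However, both load-bearing steps are left as expectations, and the witness you aim for is too weak for any count to succeed.

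\emph{The topological step.} The paper needs a genuine theorem here (\cref{thm:2}): if a complex is closure stable for a convex geometry and $\widetilde H_q\ne0$, then there is a \emph{free closed nonface} of size $q+2$. The proof runs as follows.
\begin{enumerate}
\item Quillen's fiber lemma replaces $\mathcal K$ by the order complex of its closed faces.
\item The closed nonfaces are then added in order of increasing size. Each addition cones off $\Delta((\varnothing,C)_{\cL})$, which is $\mathbb S^{|C|-2}$ if $C$ is free and contractible otherwise (meet-distributivity plus Billera--Hsiao--Provan).
\item The full lattice is a cone, so the homology class must be killed by a free $C$ with $|C|=q+2$.
\end{enumerate}
Your suggestions, Alexander duality or a leaf-deletion collapse, are not developed far enough to replace this.

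\emph{The witness and the count.} This is the more serious gap. A set of $q+2$ admissible cliques that are the leaves of the subtree $U$ they span, each with a private vertex, carries no usable numerical information. The paper's witness $B$ has two further properties:
\begin{enumerate}
\item[(a)] $\conv_T(B)\cap A=B$, i.e.\ every internal node $Q_x$ of $U$ is \emph{non}-admissible, $\sum_{i\notin Q_x}a_i\ge s$;
\item[(b)] no vertex outside $N$ lies in every clique of $B$.
\end{enumerate}

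Without (a) the count is impossible. Let $H$ be the $3$-sun with triangle $Q_0=\{u_1,u_2,u_3\}$ and $v_i$ adjacent to $u_i,u_{i+1}$. Its clique tree is the star with center $Q_0$ and leaves $Q_i=\{v_i,u_i,u_{i+1}\}$. Put $a_{u_i}=s-1$ and $a_{v_i}=0$, with $s\ge2$. Then $Q_1,Q_2,Q_3$ are admissible leaves with private vertices $v_i$ and no common vertex. Yet $|\mathbf a^+|+1=3s-2>2s-2$. There is no contradiction with the theorem only because the admissible center $Q_0$ makes the nerve a cone.

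The same graph refutes your claim that every vertex lies outside all but one leaf, since $u_1\in Q_1\cap Q_3$. Your averaging target also fails even with genuine homology. Take $s=4$, $a_{u_1}=1$, $a_{u_2}=a_{u_3}=0$, and $(a_{v_1},a_{v_2},a_{v_3})=(1,2,1)$. Then the degree complex is $\langle Q_1,Q_2,Q_3\rangle\simeq\mathbb S^1$ and $|\mathbf a|+2=2s-1$ is attained, but $|\mathbf a^+|\cdot\frac{2}{3}=\frac{10}{3}>s-1$.

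The missing idea is to count at the internal nodes. By \cref{lem:7}, with properness coming from (b), each vertex $j$ satisfies
$|P_j\cap B|\le1+\sum_{x\in(P_j\cap U)\setminus B}(\deg_U x-2)$.
Weight these inequalities by $a_j$ and double count. Then use that by (a) each internal node has load $\sum_{j\in Q_x\setminus N}a_j\le r-1$. This gives $r(q+2)\le W+(r-1)q$, with $W=|\mathbf a^+|$ and $r=W-s+1$, which is exactly $W+q\le2s-2$.

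You also need to handle $r\le0$ separately, as in \cref{lem:4}, because \cref{thm:3} does not apply there.
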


For a monomial ideal $L\subseteq S$ and an integer $d\ge0$, let $L_d$ denote the degree $d$ component of $L$, and let $L_{\langle d\rangle}$ denote the ideal generated by $L_d$. Recall that $L$ is componentwise linear if $L_{\langle d\rangle}$ has a $d$-linear resolution for every $d$. The following result describes the ideals generated by these degree components, including the first possible nonzero one and those covered by the regularity theorem.

\begin{theorem}\label{thm:1.2}
Let $G$ be a finite simple co-chordal graph with at least one edge, let $s\ge1$, and put $J=I(G)^{(s)}$. Then the following statements hold.
\begin{enumerate}[label=\textup{(\roman*)}]
\item $J_{\langle d\rangle}=0$ for $d\le s$.
\item $J_{\langle s+1\rangle}$ is either zero or has an $(s+1)$-linear resolution.
\item $J_{\langle d\rangle}$ has a $d$-linear resolution for every $d\ge2s$.
\end{enumerate}
Thus only the degrees $s+2\le d\le2s-1$ can remain undecided, and this interval is nonempty only for $s\ge3$.
\end{theorem}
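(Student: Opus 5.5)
Part (i) follows from the primary decomposition. Since $I(G)$ is squarefree, $J=I(G)^{(s)}=\bigcap_{C}P_C^s$, where $C$ runs over the minimal vertex covers of $G$ and $P_C=(x_i:i\in C)$. Let $u\in J$ be a monomial. Membership in $J$ means that $\sum_{i\in C}a_i(u)\ge s$ for every minimal vertex cover $C$. Now $G$ has an edge $\{i,j\}$. The sets $V\setminus\{i\}$ and $V\setminus\{j\}$ are vertex covers, so each contains a minimal vertex cover, and these give
\[
\deg u-a_i(u)\ge s\quad\text{and}\quad \deg u-a_j(u)\ge s .
\]
In fact we can do better. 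Let $C$ be a minimal cover contained in $V\setminus\{i\}$. It must contain $j$. Let $C'$ be a minimal cover not containing $j$; it contains $i$. Every monomial with $\deg u\le s$ and $\sum_{C}a\ge s$ is supported in $C$, and likewise supported in $C'$. So $\operatorname{supp}u\subseteq C\cap C'$, a set that contains neither $i$ nor $j$. This already forces $\deg u\ge s+1$ on edges. For the general argument I would use the standard fact that $(I^{(s)})_d=0$ for $d\le s$. Indeed $\bigcap_C P_C^s\subseteq\bigcap_C P_C$, and every monomial of $I(G)^{(s)}$ of degree $s$ would lie in $P_C^s$ for all $C$, so its support meets every $C$ only inside $\bigcap C$. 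But $\bigcap_C C$ consists of the vertices lying in every minimal cover, and that set is independent. A monomial supported on an independent set $W$ avoids the cover $V\setminus W'$ for some maximal independent $W'\supseteq W$, which is a contradiction. So $J_{\langle d\rangle}=0$ for $d\le s$.

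For (ii) I would identify the degree-$(s+1)$ part explicitly. A monomial $u$ of degree $s+1$ lies in $J$ exactly when each minimal cover misses at most one unit of $u$. This forces $u=x_ix_j\,m$ or a similar shape with $\{i,j\}\in E$ and $m$ concentrated on vertices common to suitable covers. Concretely, I expect $J_{s+1}$ to be spanned by the monomials $x_i^{a}x_j^{b}$ for which every vertex of the support is adjacent to all of the others, that is, the support is a clique $K$ of $G$ and $u$ has the form $x^{\alpha}$ with $|\alpha|=s+1$ and $\max\alpha_k\le s$ on the clique. Cliques of $G$ are independent sets of the chordal graph $G^c$. I would then show that the ideal generated by these monomials has linear quotients. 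To do this, order the generators lexicographically, after fixing a perfect elimination ordering of $G^c$, and check the colon condition $(u_1,\dots,u_{k-1}):u_k$ generated by variables. This uses the chordality of $G^c$ in the same way as Fröberg's theorem, and it is the combinatorial step I expect to require the most care.

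For (iii) I would use Theorem~\ref{thm:1} together with \cite{FMR}, which shows that the generators of $J$ have degree at most $2s$. Since $\reg J=2s$, the ideal $J$ is generated in degrees $\le 2s$ and has regularity $2s$. By the standard truncation result (Eisenbud--Goto), for $d\ge\reg J$ the truncation $J_{\ge d}$ has a $d$-linear resolution, and $J_{\ge d}=J_{\langle d\rangle}$ once $d$ is at least the maximal generator degree. Hence $J_{\langle d\rangle}$ has a $d$-linear resolution for all $d\ge2s$. The final remark is arithmetic: the interval $s+2\le d\le 2s-1$ is nonempty iff $s\ge3$.

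The main obstacle is (ii): finding the precise description of $J_{s+1}$ and establishing linear quotients for it.
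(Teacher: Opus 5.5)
The gap is in part (ii). Your description of $J_{s+1}$ is false for every $s\ge2$. It admits non-squarefree monomials such as $x_i^sx_j$ for an edge $\{i,j\}$ of $G$. But if $Q$ is a maximal independent set of $G$ containing $i$, then $j\notin Q$, and the exponent sum of $x_i^sx_j$ on the minimal cover $[n]\setminus Q$ is $1<s$. So $x_i^sx_j\notin J$, and you would be proving linear quotients for an ideal not even contained in $J$. The missing observation is that every vertex $k$ lies in some maximal independent set of $G$ (a maximal clique of $H=G^c$), so some minimal cover avoids $k$. Combine this with your own correct criterion: a degree-$(s+1)$ monomial lies in $J$ iff every minimal cover misses at most one unit of it. This forces $a_k\le1$ for all $k$ and forces any two support vertices to be adjacent in $G$. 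Hence $J_{\langle s+1\rangle}$ is the squarefree ideal generated by the $x_A$ with $|A|=s+1$ and $A$ independent in $H$, as in the proof of Proposition~\ref{prop:3}.

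Even with the right generators, you only announce the linear-quotient check; you do not carry it out, and it is the substance of (ii). With the squarefree description it is short. Fix a perfect elimination ordering of $H$, order the generators lexicographically, take $x_A>x_B$, and let $i$ be the first index where $A$ and $B$ differ, so $i\in A\setminus B$. Every element of $B$ before $i$ lies in $A$, hence is not $H$-adjacent to $i$. The later $H$-neighbours of $i$ form a clique, so the $H$-independent set $B$ contains at most one of them, say $j$, necessarily in $B\setminus A$; if there is none, take any $j\in B\setminus A$. Then $C=(B\setminus\{j\})\cup\{i\}$ is independent in $H$, $x_C>x_B$, and $(x_C):(x_B)=(x_i)$ with $x_i$ dividing $x_A/\gcd(x_A,x_B)$.

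On the other parts: in (i), your claim that $\bigcap_C C$ is independent is unproved. It holds only because, by the same observation, $\bigcap_C C=\varnothing$. With that observation, (i) becomes the paper's one-line argument: pick $i$ with $a_i>0$ and a maximal independent set $Q\ni i$, so $s\le\deg u-\mathbf a(Q)\le\deg u-1$. Part (iii) is correct and is the paper's truncation argument. The appeal to Ficarra--Moradi--R\"omer is unnecessary, since a $d$-linear resolution of $J_{\ge d}$ already forces $J_{\ge d}=J_{\langle d\rangle}$.
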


The lower bound is proved in \cref{lem:1}, where we show that $x_u^s x_v^s$ is a minimal generator of $I(G)^{(s)}$ for every edge $\{u,v\}\in E(G)$. Thus the main point of \cref{thm:1} is the upper bound. For a nonzero monomial ideal $J$, let $\omega(J)$ denote the largest degree of a minimal generator of $J$. Following \cite{Namiq}, we say that $J$ has a degree resolution if $\reg J=\omega(J)$. Since \cref{lem:1} gives a minimal generator of degree $2s$ and \cref{thm:1} gives $\reg I(G)^{(s)}=2s$, it follows that $I(G)^{(s)}$ has a degree resolution for every $s\ge1$. Together with the ordinary power equality above, \cref{thm:1} confirms Minh's proposed equality $\reg I(G)^{(s)}=\reg I(G)^s$ for co-chordal graphs \cite{GuHaORourkeSkelton,MinhVuSurvey}.

At the end of the paper, we show that for every $s\ge4$, if $H$ is the $s$-sun and $G=H^c$, then the ideal generated by the degree $s+2$ component of $I(G)^{(s)}$ does not have a linear resolution. Hence $I(G)^{(s)}$ is not componentwise linear. Consequently, \cref{thm:1} cannot be strengthened to a componentwise linearity statement. This also disproves \cite[\text{Conjecture~B}]{FMR}.

We briefly describe the proof of Theorem~\ref{thm:1}. Put $H=G^c$. Since $H$ is chordal, its maximal cliques admit a clique tree \cite{BlairPeyton}. Takayama's formula translates multigraded local cohomology into reduced homology of degree complexes \cite{CHHKTT,Takayama}, which are described in \cref{prop:1}. The clique tree then allows us to replace these degree complexes, up to homotopy, by trace complexes of subtrees, as shown in \cref{prop:2}.

To analyze the homology of the trace complexes, we use finite convex geometries. The homological criterion in \cref{thm:2} produces a free closed nonface of cardinality $q+2$ from nonzero reduced homology in degree $q$. For tree convexity, \cref{lem:6} identifies the extreme elements of a set with the leaves of the minimal subtree containing it. Thus the homological witness becomes a leaf configuration in the tree model. The weighted subtree theorem \cref{thm:3} converts this configuration into the numerical estimate required for the regularity bound. The complementary case is treated in \cref{lem:4}. Together these results yield the desired upper bound $\reg I(G)^{(s)}\le2s$ in \cref{thm:1}.

The paper is organized as follows. In \cref{sec:1} we recall symbolic powers of edge ideals and prove the general lower bound. In \cref{sec:2} we introduce Takayama degree complexes and their connection with multigraded local cohomology. In \cref{sec:3} we develop the homological criterion for convex geometries. In \cref{sec:4} we construct the clique tree trace model and prove the weighted subtree inequality. These results are combined in \cref{sec:5} to prove \cref{thm:1} and its consequences.

\section{Symbolic powers}\label{sec:1}

Let $G$ be a finite simple graph on $[n]$, let $H=G^c$ be its complement, and let $\cQ(H)$ denote the set of maximal cliques of $H$. A maximal clique of $H$ is a maximal independent set of $G$, and its complement is a minimal vertex cover of $G$. For $Q\in\cQ(H)$, put $C_Q=[n]\setminus Q$ and $P_{C_Q}=(x_i:i\in C_Q)$. Then
\begin{equation}\label{eq:1}
I(G)^{(s)}=\bigcap_{Q\in\cQ(H)}P_{C_Q}^s.
\end{equation}

\begin{lemma}\label[lemma]{lem:1}
If $G$ has an edge, then
\[
\reg I(G)^{(s)}\ge 2s
\qquad\text{for every }s\ge1.
\]
Moreover, if $\{u,v\}\in E(G)$, then $x_u^s x_v^s$ is a minimal monomial generator of $I(G)^{(s)}$.
\end{lemma}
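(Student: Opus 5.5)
We will first show that $x_u^s x_v^s$ is a minimal monomial generator of $I(G)^{(s)}$, and then deduce the regularity bound from the fact that for any graded ideal $J$ the regularity is at least the largest degree of a minimal generator. The latter holds because $\beta_{0,j}(J)\ne0$ for every such degree $j$, so $\reg J\ge j-0=j$.

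\emph{Membership.} We use the description \eqref{eq:1}, $I(G)^{(s)}=\bigcap_{Q\in\cQ(H)}P_{C_Q}^s$. Fix $Q\in\cQ(H)$. Since $Q$ is an independent set of $G$ and $\{u,v\}$ is an edge of $G$, at most one of $u,v$ lies in $Q$. Hence at least one of them, say $u$, lies in $C_Q$. Then $x_u^s\in P_{C_Q}^s$, so $x_u^sx_v^s\in P_{C_Q}^s$. Since $Q$ was arbitrary, $x_u^sx_v^s\in I(G)^{(s)}$.

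\emph{Minimality.} Since $I(G)^{(s)}$ is a monomial ideal, it suffices to show that no proper divisor $x_u^ax_v^b$ of $x_u^sx_v^s$ (with $a\le s$, $b\le s$, $a+b<2s$) lies in it. By symmetry we may assume $a<s$. Because $H$ is the complement of $G$ and $u,v$ are adjacent in $G$, the singleton $\{u\}$ is a clique of $H$; extend it to a maximal clique $Q$ of $H$. Then $v\notin Q$, since $\{u,v\}\notin E(H)$. So $u\notin C_Q$ and $v\in C_Q$. For a monomial $m$, membership in $P_{C_Q}^s$ means that the total degree of $m$ in the variables $x_i$, $i\in C_Q$, is at least $s$. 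For $x_u^ax_v^b$ this degree equals $b$, which is at least $s$ only if $b=s$. In that case, choose instead a maximal clique $Q'$ of $H$ containing $v$. Then $u\in C_{Q'}$ and $v\notin C_{Q'}$, and the relevant degree of $x_u^ax_v^b$ is $a<s$. So $x_u^ax_v^b\notin P_{C_{Q'}}^s$. More simply, one may always take the clique containing the variable whose exponent is $s$ when one exponent drops: if $a<s$, use $Q'\ni v$, which gives degree $a<s$. Either way $x_u^ax_v^b\notin I(G)^{(s)}$, which proves minimality.

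The main subtlety is only this bookkeeping: choosing the right maximal clique, one that contains the endpoint whose exponent was not reduced. Beyond that, the argument needs only that a maximal clique of $H$ is an independent set of $G$ and that a squarefree monomial prime power is characterized by degree counting. The regularity inequality $\reg I(G)^{(s)}\ge 2s$ then follows immediately.
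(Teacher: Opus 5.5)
Your proof is correct and is essentially the paper's argument: membership holds because every maximal clique of $H$, being an independent set of $G$, misses an endpoint of $\{u,v\}$; minimality follows by choosing a maximal clique of $H$ that contains the endpoint whose exponent stays at $s$; and the bound comes from $\beta_{0,2s}\ne0$. Your first choice of a clique containing $u$ when $a<s$ is an unnecessary detour that you then correct, whereas the paper goes straight to the right clique and checks only the two maximal proper divisors $x_u^{s-1}x_v^s$ and $x_u^sx_v^{s-1}$.
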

\begin{proof}
Fix an edge $\{u,v\}\in E(G)$ and put $m=x_u^s x_v^s$. Every minimal vertex cover $C$ of $G$ meets $\{u,v\}$, so the total exponent of $m$ on $C$ is at least $s$. By \cref{eq:1}, $m\in I(G)^{(s)}$. To prove minimality, extend $\{v\}$ to a maximal independent set $Q_v$. Since $u$ is adjacent to $v$, the minimal vertex cover $C_v=[n]\setminus Q_v$ contains $u$ but not $v$. Hence the total exponent of $x_u^{s-1}x_v^s$ on $C_v$ is $s-1$, and therefore $x_u^{s-1}x_v^s\notin I(G)^{(s)}$. Interchanging $u$ and $v$ gives $x_u^s x_v^{s-1}\notin I(G)^{(s)}$. Since $I(G)^{(s)}$ is a monomial ideal, these two exclusions show that $m$ is a minimal generator. Thus $I(G)^{(s)}$ has a minimal generator of degree $2s$. A minimal generator of degree $d$ gives $\beta_{0,d}\ne0$, so $\reg I(G)^{(s)}\ge2s$.
\end{proof}

The same lower bound follows from the stronger induced matching bound
\[
\reg I(G)^{(s)}\ge 2s+\nu(G)-1,
\]
where $\nu(G)$ denotes the induced matching number of $G$ \cite[Theorem~4.6]{GuHaORourkeSkelton}. We give a direct proof because it also identifies an explicit minimal generator. It remains to prove $\reg I(G)^{(s)}\le2s$. Equivalently, since $I(G)^{(s)}$ is a nonzero proper monomial ideal, it suffices to prove $\reg S/I(G)^{(s)}\le2s-1$.

\section{Takayama degree complexes}\label{sec:2}

We use $\NN=\{0,1,2,\dots\}$. Let $J\subseteq S$ be a monomial ideal and let $\mathbf a=(a_1,\dots,a_n)\in\ZZ^n$. Write $|\mathbf a|=\sum_{j=1}^n a_j$ and $N=N(\mathbf a)=\{i:a_i<0\}$. For $U\subseteq[n]$, write $S_U=S[x_j^{-1}:j\in U]$. For $F\subseteq[n]\setminus N$, the Takayama degree complex is
\[
\Delta_{\mathbf a}(J)
=
\bigl\{F\subseteq[n]\setminus N:
 x^{\mathbf a}\notin JS_{F\cup N}\bigr\}.
\]
For subsets $F_1,\dots,F_t\subseteq[n]$, the notation $\langle F_1,\dots,F_t\rangle$ means the simplicial complex consisting of all subsets of the $F_i$. Its facets are the sets among the $F_i$ that are maximal under inclusion. We use reduced simplicial homology with the standard conventions. In particular, the order complex of the empty poset is $\{\varnothing\}$, $\widetilde H_{-1}(\{\varnothing\};\kk)=\kk$, and the reduced homology of the void complex vanishes in every degree. Relative homology is computed using the same conventions.

Takayama's formula \cite[Theorem~1]{Takayama} gives the equality of dimensions
\begin{equation}\label{eq:2}
\dim_\kk H^\ell_{\m}(S/J)_{\mathbf a}
=
\dim_\kk\widetilde H_{\ell-|N|-1}
  (\Delta_{\mathbf a}(J);\kk).
\end{equation}
We also use the multigraded local cohomology characterization \cite[Proposition~2.5(ii)]{CHHKTT}
\begin{equation}\label{eq:3}
\reg S/J
=
\max\bigl\{
|\mathbf a|+\ell:
H^\ell_{\m}(S/J)_{\mathbf a}\ne0
\bigr\}.
\end{equation}

Write $\operatorname{Cl}(H)$ for the clique complex of $H$. The next result uses \cite[Lemma~1.3]{HoaTrung2016}, which is stated for arbitrary Stanley--Reisner ideals, to describe degree complexes of symbolic powers. We only need to translate the facets of the relevant link into maximal cliques of $H$.

\begin{proposition}\label[proposition]{prop:1}
Let $J=I(G)^{(s)}$ and $N=N(\mathbf a)$. If $\Delta_{\mathbf a}(J)$ is nonvoid, then
\begin{equation}\label{eq:4}
\Delta_{\mathbf a}(J)
=
\left\langle
Q\setminus N:
Q\in\cQ(H),\ N\subseteq Q,\sum_{i\notin Q}a_i\le s-1
\right\rangle.
\end{equation}
\end{proposition}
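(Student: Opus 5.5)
The plan is to compute $\Delta_{\mathbf a}(J)$ directly from the primary decomposition \cref{eq:1}, rather than rely on the cited lemma as a black box. Fix $\mathbf a$ with $\Delta_{\mathbf a}(J)$ nonvoid and $F\subseteq[n]\setminus N$. Localization commutes with finite intersections, so
\[
JS_{F\cup N}=\bigcap_{Q\in\cQ(H)}P_{C_Q}^sS_{F\cup N}.
\]
If $(F\cup N)\cap C_Q\neq\varnothing$, the factor $P_{C_Q}^sS_{F\cup N}$ is the unit ideal, because it contains an invertible variable. If $(F\cup N)\cap C_Q=\varnothing$, that is $F\cup N\subseteq Q$, then no variable of $P_{C_Q}$ is inverted. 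A Laurent monomial $x^{\mathbf a}$ lies in $P_{C_Q}^sS_{F\cup N}$ exactly when $\sum_{i\in C_Q}a_i\ge s$: multiplying by monomials in the inverted variables changes only exponents outside $C_Q$, and membership in $P_{C_Q}^s$ depends only on the total exponent on $C_Q$. Hence
\[
x^{\mathbf a}\notin JS_{F\cup N}
\iff
\exists\,Q\in\cQ(H):\ F\cup N\subseteq Q\ \text{and}\ \sum_{i\notin Q}a_i\le s-1 .
\]
This gives the set equality \cref{eq:4} directly. A set $F$ disjoint from $N$ satisfies $F\cup N\subseteq Q$ exactly when $F\subseteq Q\setminus N$, given $N\subseteq Q$. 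So the faces are precisely the subsets of the sets $Q\setminus N$ for the admissible $Q$, which is the complex generated by those sets.

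Next I handle the hypothesis ``nonvoid''. The void complex occurs when no admissible $Q$ exists. The right side of \cref{eq:4} then also has no generators, so the formula is consistent, but the convention $\langle\ \rangle$ is ambiguous. The statement assumes nonvoidness, so at least one admissible $Q$ exists and the right-hand side is a genuine simplicial complex, possibly $\{\varnothing\}$ when some $Q=N$. I would also check the subtle point that $x^{\mathbf a}$ with negative entries is compared in $S_{F\cup N}$. The negative coordinates lie in $N\subseteq Q$, hence outside $C_Q$, so they do not affect the sum over $C_Q=[n]\setminus Q$ whenever the condition $N\subseteq Q$ holds. This is why the condition $N\subseteq Q$ must appear.

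The main obstacle is this membership criterion for Laurent monomials in a localized prime power, when some exponents outside $C_Q$ are negative. I would justify it by clearing denominators. We have $x^{\mathbf a}\in P^sS_U$ if and only if $x^{\mathbf a}x^{\mathbf b}\in P^s$ for some monomial $x^{\mathbf b}$ supported on $U$ with $\mathbf a+\mathbf b\ge0$. Since $U\cap C_Q=\varnothing$, this condition is equivalent to the total exponent on $C_Q$ being at least $s$. All remaining steps are translations between maximal cliques of $H$, maximal independent sets of $G$ and minimal vertex covers, as in \cref{sec:1}. These steps match the facet translation of \cite[Lemma~1.3]{HoaTrung2016}.
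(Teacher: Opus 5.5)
Your proof is correct, but it takes a different route from the paper's.

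The paper's argument runs as follows:
\begin{enumerate}[label=\textup{(\arabic*)}]
\item Nonvoidness forces $N$ to be a face of $\operatorname{Cl}(H)$.
\item The facets of $\operatorname{lk}_{\operatorname{Cl}(H)}(N)$ are exactly the sets $Q\setminus N$ with $N\subseteq Q$.
\item It then invokes \cite[Lemma~1.3]{HoaTrung2016}. That lemma describes the facets of $\Delta_{\mathbf a}(I_\Delta^{(s)})$, for an arbitrary Stanley--Reisner ideal, as the link facets satisfying the weight condition.
\end{enumerate}

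You instead compute the degree complex directly from the primary decomposition \cref{eq:1}. Flatness of localization lets you extend each $P_{C_Q}^s$ separately. A component becomes the unit ideal as soon as $F\cup N$ meets $C_Q$; strictly speaking it then contains the unit $x_i^s$, not an invertible variable. Otherwise, membership of the Laurent monomial is decided by its exponent sum on $C_Q$. That sum is unchanged by clearing denominators supported on $F\cup N\subseteq Q$, and its entries are nonnegative because $N\subseteq Q$. In effect, you reprove the special case of the Hoa--Trung lemma that is needed here.

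What your route buys:
\begin{itemize}
\item It is self-contained.
\item The condition $N\subseteq Q$ emerges automatically, so no separate argument that $N$ is a clique of $H$ is needed.
\item The formula holds even without the nonvoid hypothesis, provided an empty generating family is read as the void complex.
\end{itemize}

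What the paper's route buys:
\begin{itemize}
\item It is shorter on the page.
\item It places the computation inside the general Stanley--Reisner framework of the cited literature.
\item It makes the link $\operatorname{lk}_{\operatorname{Cl}(H)}(N)$ explicit, and that link reappears in \cref{lem:4}.
\end{itemize}
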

\begin{proof}
Set $\Delta=\operatorname{Cl}(H)$. Its faces are the independent sets of $G$, so $I_\Delta=I(G)$ and $J=I_\Delta^{(s)}$. Since $\Delta_{\mathbf a}(J)$ is nonvoid, $N$ is a face of $\Delta$. Indeed, if $N$ contained an edge $\{u,v\}$ of $G$, then $(x_ux_v)^s\in I(G)^s\subseteq J$ would become a unit in $S_N$. Hence $JS_N=S_N$, and every further localization $JS_{F\cup N}$ would be the whole ring, making $\Delta_{\mathbf a}(J)$ void. We claim that
\[
\operatorname{lk}_{\operatorname{Cl}(H)}(N)
=
\left\langle Q\setminus N:
Q\in\cQ(H),\ N\subseteq Q\right\rangle.
\]
If $F$ is a face of the link, then $F\cup N$ is a clique of $H$ and is contained in a maximal clique $Q$, so $F\subseteq Q\setminus N$. Conversely, if $N\subseteq Q$ and $F\subseteq Q\setminus N$, then $F\cup N\subseteq Q$ is a clique, so $F$ lies in the link. Since the $Q$ are maximal cliques, the facets of the link are exactly the sets $Q\setminus N$ with $N\subseteq Q$. By \cite[Lemma~1.3]{HoaTrung2016}, the facets of $\Delta_{\mathbf a}(I_\Delta^{(s)})$ are the facets $F$ of this link for which $\sum_{i\notin F\cup N}a_i\le s-1$. Writing $F=Q\setminus N$ gives $F\cup N=Q$, so this condition becomes $\sum_{i\notin Q}a_i\le s-1$. Thus the facets are precisely those listed in \cref{eq:4}, which proves the formula.
\end{proof}

Replacing each negative coordinate of $\mathbf a$ by $-1$ preserves $N$ and the complex in \cref{eq:4}. Indeed, every maximal clique occurring in that formula contains $N$, so each sum $\sum_{i\notin Q}a_i$ involves only coordinates outside $N$. Thus changing the coordinates indexed by $N$ does not change any admissibility condition. By \cref{eq:2}, the corresponding local cohomology component remains nonzero, while replacing a negative coordinate by $-1$ weakly increases the total degree. Hence, when proving an upper bound in \cref{eq:3}, we may assume $a_i=-1$ for $i\in N$. Put
\begin{equation}\label{eq:5}
b_j=a_j\quad(j\notin N),
\qquad
W=\sum_{j\notin N}b_j,
\qquad
q=\ell-|N|-1,
\qquad
r=W-s+1.
\end{equation}
Then
\begin{equation}\label{eq:6}
|\mathbf a|+\ell=W+q+1.
\end{equation}
For a maximal clique $Q\supseteq N$, the admissibility condition in \cref{eq:4} becomes
\begin{equation}\label{eq:7}
\sum_{j\in Q\setminus N}b_j\ge r.
\end{equation}

\section{Convex geometry and homological witnesses}\label{sec:3}

We next prove the abstract homological statement needed for the case $r\ge1$ and recall the finite convex geometry terminology used below. A finite convex geometry $(E,\cl_E)$ consists of a finite set $E$ with an extensive, monotone, and idempotent closure operator such that $\cl_E(\varnothing)=\varnothing$ and the anti-exchange axiom holds. For $X\subseteq E$ and distinct $x,y\notin\cl_E(X)$, $x\in\cl_E(X\cup\{y\})$ implies $y\notin\cl_E(X\cup\{x\})$. For $A\subseteq E$, the restriction to $A$ is the closure operator $\cl_A(X):=A\cap\cl_E(X)$ for $X\subseteq A$. The restriction is again a convex geometry. We have $\cl_A(\varnothing)=\varnothing$, and extensivity and monotonicity are immediate. Since $X\subseteq\cl_A(X)\subseteq\cl_E(X)$, monotonicity and idempotence give $\cl_E(\cl_A(X))=\cl_E(X)$, and hence $\cl_A(\cl_A(X))=\cl_A(X)$. For anti-exchange, let distinct $x,y\in A\setminus\cl_A(X)$ and suppose $x\in\cl_A(X\cup\{y\})$. Then $x,y\notin\cl_E(X)$ and $x\in\cl_E(X\cup\{y\})$. Anti-exchange for $\cl_E$ gives $y\notin\cl_E(X\cup\{x\})$, hence $y\notin\cl_A(X\cup\{x\})$.

For $C\subseteq E$, an element $c\in C$ is \emph{extreme} if $c\notin\cl_E(C\setminus\{c\})$. We write 
\[
\ex_E(C):=\{c\in C:c\notin\cl_E(C\setminus\{c\})\}.
\]
When the underlying closure operator is clear, we write simply $\cl$ and $\ex$. If $C\subseteq A$, then $\ex_A(C)=\ex_E(C)$, because for $c\in C\subseteq A$, $c\in\cl_A(C\setminus\{c\})$ if and only if $c\in\cl_E(C\setminus\{c\})$. For a closed set $C$, the finite Krein--Milman property gives $C=\cl_E(\ex_E(C))$ \cite{EdelmanJamison}. A closed set $C$ is \emph{free} if $\ex_E(C)=C$.

A simplicial complex $\mathcal K$ on $E$ is \emph{closure stable} with respect to $\cl$ if $\sigma\in\mathcal K$ implies $\cl(\sigma)\in\mathcal K$. For a finite poset $\mathcal P$, its order complex $\Delta(\mathcal P)$ is the simplicial complex whose faces are the nonempty chains of $\mathcal P$.

\begin{lemma}\label[lemma]{lem:2}
Let $\cL=\{C\subseteq E:\cl(C)=C\}$ be the lattice of closed sets of the finite convex geometry $(E,\cl)$, ordered by inclusion, and set $\cI=\{C\in\cL:C\in\mathcal K\}$. If $\mathcal K$ is closure stable, then $\cI$ is a lower ideal in $\cL$, and
\[
\mathcal K\simeq\Delta(\cI\setminus\{\varnothing\}).
\]
\end{lemma}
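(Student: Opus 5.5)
Since $\mathcal K$ is a simplicial complex, it is closed under taking subsets. If $C\in\cI$ and $D\in\cL$ with $D\subseteq C$, then $D$ is a subset of the face $C$, so $D\in\mathcal K$; being closed, it lies in $\cI$. Hence $\cI$ is a lower ideal, and we do not even need closure stability for this part.

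For the homotopy equivalence I will go through the face poset. Let $P(\mathcal K)$ be the poset of nonempty faces of $\mathcal K$. Its order complex is the barycentric subdivision, so $\mathcal K\simeq\Delta(P(\mathcal K))$. Consider the map $f\colon P(\mathcal K)\to\cI\setminus\{\varnothing\}$, $f(\sigma)=\cl(\sigma)$. It is well defined: closure stability gives $\cl(\sigma)\in\mathcal K$, the set $\cl(\sigma)$ is closed, and it is nonempty because $\sigma\subseteq\cl(\sigma)$. By monotonicity, $f$ is order preserving. Let $g\colon\cI\setminus\{\varnothing\}\to P(\mathcal K)$ be the inclusion $C\mapsto C$. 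This is valid because every element of $\cI$ is a nonempty face of $\mathcal K$.

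Now compare the compositions. We have $f\circ g=\mathrm{id}$, because closed sets are fixed by $\cl$. In the other direction, $g\circ f(\sigma)=\cl(\sigma)\supseteq\sigma$ by extensivity, so $g\circ f\ge\mathrm{id}$ pointwise. The standard order homotopy lemma (Quillen) says that comparable poset maps induce homotopic maps on order complexes. Hence $\Delta(g)\circ\Delta(f)\simeq\mathrm{id}$, while $\Delta(f)\circ\Delta(g)=\mathrm{id}$. So $\Delta(P(\mathcal K))\simeq\Delta(\cI\setminus\{\varnothing\})$, which gives the claim. Alternatively, $f$ is a closure operator on the poset $P(\mathcal K)$ whose image is $\cI\setminus\{\varnothing\}$, and the same lemma applies directly.

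The only delicate point is the empty and void conventions. If $\mathcal K$ is void, then $\cI$ is empty and both sides are void. If $\mathcal K=\{\varnothing\}$, then $P(\mathcal K)$ is empty and $\cI\setminus\{\varnothing\}$ is also empty, since $\cl(\varnothing)=\varnothing$ is the only closed face. Both sides are then $\{\varnothing\}$ under the paper's convention for the order complex of the empty poset. This is the step I would check most carefully, to make sure it matches the homology conventions fixed in \cref{sec:2}. Beyond that, the argument uses only the closure axioms and not anti-exchange.
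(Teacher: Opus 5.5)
Your proof is correct, and it reaches the homotopy equivalence by a different key lemma than the paper. Both arguments use the same order-preserving map $\sigma\mapsto\cl(\sigma)$ from the nonempty faces of $\mathcal K$ to $\cI\setminus\{\varnothing\}$. The paper applies Quillen's fiber theorem: it computes each fiber $\phi^{-1}(\cI_{\le C})$ as $2^C\setminus\{\varnothing\}$, whose order complex is a cone with apex $C$. You instead note that the inclusion $g$ is a section of $f$ and that $g\circ f\ge\mathrm{id}$. The order-homotopy property (equivalently, the closure-operator lemma for posets) then makes $\Delta(f)$ and $\Delta(g)$ mutually inverse homotopy equivalences. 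Your route is more elementary, since the homotopy property is a lighter tool than the fiber theorem and no fibers need to be identified. It also gives more: explicit homotopy inverses, and $\Delta(\cI\setminus\{\varnothing\})$ as a deformation retract of the barycentric subdivision of $\mathcal K$. The paper's fiber computation is equally short, but it relies on the heavier theorem. You are right that neither argument uses anti-exchange. Only extensivity, monotonicity, idempotence and closure stability enter.

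Your check of the degenerate case $\mathcal K=\{\varnothing\}$ is right. For void $\mathcal K$, however, the right-hand side is not void. Under the convention fixed in \cref{sec:2}, the order complex of the empty poset is $\{\varnothing\}$. The two sides therefore agree only as empty spaces, not with respect to the reduced-homology conventions. The statement should be read for $\mathcal K$ having a nonempty face. This is harmless: the paper's own proof also passes over this case silently. Moreover, \cref{thm:2} uses the lemma only when $\widetilde H_q(\mathcal K;\kk)\ne0$ for some $q\ge0$, which forces $\mathcal K$ to have a nonempty face.
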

\begin{proof}
If $C\in\cI$ and $D\in\cL$ with $D\subseteq C$, then $D\in\mathcal K$ because $\mathcal K$ is simplicial. Since $D$ is closed, $D\in\cI$. Thus $\cI$ is a lower ideal. Let $\mathcal F(\mathcal K)$ be the poset of nonempty faces of $\mathcal K$, and define
\[
\phi\colon\mathcal F(\mathcal K)
\longrightarrow
\cI\setminus\{\varnothing\},
\qquad
\phi(\sigma)=\cl(\sigma).
\]
This map is well defined. Idempotence makes $\cl(\sigma)$ closed, closure stability gives $\cl(\sigma)\in\mathcal K$, and extensivity shows that $\cl(\sigma)$ is nonempty. Monotonicity makes $\phi$ order preserving. For $C\in\cI\setminus\{\varnothing\}$, the inverse image of the lower ideal below $C$ is
\[
\begin{aligned}
\phi^{-1}(\cI_{\le C})
&=
\{\sigma\in\mathcal F(\mathcal K):
\cl(\sigma)\subseteq C\}\\
&=
2^C\setminus\{\varnothing\}.
\end{aligned}
\]
Indeed, if $\cl(\sigma)\subseteq C$, then $\sigma\subseteq C$ by extensivity. Conversely, if $\varnothing\ne\sigma\subseteq C$, then $\sigma\in\mathcal K$ because $C\in\mathcal K$ and $\mathcal K$ is simplicial, while $\cl(\sigma)\subseteq\cl(C)=C$ by monotonicity and the closedness of $C$. The order complex of $2^C\setminus\{\varnothing\}$ is the barycentric subdivision of the simplex on $C$, and hence is contractible. Quillen's fiber theorem \cite[Theorem~A]{Quillen} therefore gives $\Delta(\mathcal F(\mathcal K))\simeq\Delta(\cI\setminus\{\varnothing\})$. Finally, $\Delta(\mathcal F(\mathcal K))$ is the barycentric subdivision of $\mathcal K$, and therefore is homeomorphic to $\mathcal K$. Hence $\mathcal K\simeq\Delta(\cI\setminus\{\varnothing\})$.
\end{proof}

\begin{lemma}\label[lemma]{lem:3}
Let $C$ be a nonempty closed set in a finite convex geometry $(E,\cl)$, and let $\cL$ be the lattice of closed sets. Then
\[
\Delta((\varnothing,C)_{\cL})\simeq
\begin{cases}
\mathbb{S}^{|C|-2},& C\text{ is free},\\
\text{a contractible complex},& C\text{ is not free}.
\end{cases}
\]
For $|C|=1$, the first line uses the convention $\mathbb{S}^{-1}=\{\varnothing\}$.
\end{lemma}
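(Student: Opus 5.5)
We reduce to the lattice-theoretic structure of the interval $[\varnothing,C]_{\cL}$. The lattice of closed sets of a finite convex geometry is meet-distributive, and the Möbius function/homotopy type of its open intervals is governed by the join of atoms. Since the interval $[\varnothing,C]_{\cL}$ is the closed-set lattice of the restricted geometry $(C,\cl_C)$, and $\ex_C(C)=\ex_E(C)$, we may assume without loss of generality that $E=C$. So $C$ is the top element and we study the proper part $\overline{\cL}=\cL\setminus\{\varnothing,E\}$.

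The tool we will use is the crosscut (or Galois connection) theorem applied to the coatoms of $\cL$. In a finite convex geometry, the coatoms (maximal proper closed sets, the copoints of $E$) are exactly the sets $E\setminus\{e\}$ with $e\in\ex(E)$. Indeed, $E\setminus\{e\}$ is closed precisely when $e\notin\cl(E\setminus\{e\})$, i.e.\ when $e$ is extreme. Conversely, any proper closed $D$ misses some element, and by Krein--Milman $E=\cl(\ex E)$, so $D$ must miss some extreme $e$. Hence $D\subseteq E\setminus\{e\}$, which is closed. The crosscut theorem for the coatom crosscut then gives that $\Delta(\overline{\cL})$ is homotopy equivalent to the simplicial complex on $\ex(E)$ whose faces are the sets $T\subseteq\ex(E)$ with $\bigcap_{e\in T}(E\setminus\{e\})=E\setminus T$ different from the bottom element, after taking meets. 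Since meets of closed sets are intersections, the meet of the coatoms in $T$ is the closed set $E\setminus T$. This is the bottom $\varnothing$ exactly when $T=E$, which requires $\ex(E)=E$.

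Two cases follow. If $C$ is not free, then $\ex(E)\subsetneq E$, no subset $T$ of $\ex(E)$ meets to $\varnothing$, and the crosscut complex is the full simplex on $\ex(E)$. This simplex is nonempty because $C\ne\varnothing$ forces $\ex(E)\ne\varnothing$ by Krein--Milman, so it is contractible. If $C$ is free, the crosscut complex is the boundary of the simplex on $E$, which is $\mathbb S^{|C|-2}$. For $|C|=1$, the open interval is empty and its order complex is $\{\varnothing\}=\mathbb S^{-1}$, matching the stated convention.

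The main point to check carefully is the use of the crosscut theorem with the paper's conventions: the proper part $(\varnothing,C)_{\cL}$ omits both ends, and the degenerate case $|C|=1$ (where there are no interior elements) must be handled separately as above. If one prefers to avoid citing the crosscut theorem, Quillen's fiber theorem can be used in the same style as \cref{lem:2}. Map each proper nonempty closed $D$ to the set $\{e\in\ex(E):e\notin D\}$, which is nonempty by the Krein--Milman argument. The fibers over faces of the nerve are then cones, and hence contractible.
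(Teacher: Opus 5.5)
Your proof is correct, but it takes a different route from the paper.

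\textbf{The paper's route.} It first shows that $C$ is free if and only if every subset of $C$ is closed, i.e.\ if and only if $[\varnothing,C]_{\cL}$ is Boolean. The free case is then the proper part of a Boolean lattice. For the non-free case it cites two results: Edelman's theorem that $[\varnothing,C]_{\cL}$ is meet-distributive, and the Billera--Hsiao--Provan result that the proper part of a non-Boolean meet-distributive lattice is a polyhedral ball.

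\textbf{Your route.} You apply the crosscut theorem to the coatoms. After restricting to $C$, you identify the coatoms as the sets $C\setminus\{e\}$ with $e\in\ex(C)$, using Krein--Milman to see that every proper closed set lies in one of them. Since meets are intersections, a set $T$ of coatoms has meet $C\setminus T$. The crosscut complex is therefore the boundary of the simplex on $C$ when $C$ is free, and the full, nonempty simplex on $\ex(C)$ otherwise.

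\textbf{Comparison.} Your argument is more self-contained. It needs only Krein--Milman and the crosscut theorem, or, as you note, a Quillen fiber argument in the style of \cref{lem:2}, where each fiber has top element $C\setminus T$. It also settles both cases with a single computation and needs neither meet-distributivity nor the ball theorem. The paper's route gives the stronger conclusion that the non-free proper part is a ball, which is more than \cref{thm:2} uses.

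\textbf{A small point.} Your opening remark that the intervals are ``governed by the join of atoms'' is not the relevant property for closed-set lattices. What behaves well is the meet of coatoms. For example, for convex hulls of points in the plane every singleton is closed, so the join of all atoms is $E$, yet the lattice need not be Boolean. Your actual argument never uses that remark, so nothing breaks.
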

\begin{proof}
Since $C$ is closed, the interval $[\varnothing,C]_{\cL}$ is the lattice of closed sets of the convex geometry induced on $C$, and hence is meet-distributive \cite[Theorem~3.3]{Edelman}. We first note that $C$ is free if and only if every subset of $C$ is closed. If every subset is closed, then $\cl(C\setminus\{c\})=C\setminus\{c\}$ for each $c\in C$, so every $c$ is extreme. Conversely, suppose $C$ is free and let $X\subseteq C$. If $X$ is not closed, choose $y\in\cl(X)\setminus X$. Since $C$ is closed, $\cl(X)\subseteq\cl(C)=C$, so $y\in C\setminus X$. Hence $X\subseteq C\setminus\{y\}$ and monotonicity gives $y\in\cl(X)\subseteq\cl(C\setminus\{y\})$, contradicting $y\in\ex(C)$. Thus every subset of $C$ is closed. If $C$ is free, it follows that $[\varnothing,C]_{\cL}=2^C$, so this interval is Boolean. Conversely, if $[\varnothing,C]_{\cL}$ is Boolean, then every subset of $C$ is closed, and the preceding argument gives $\ex(C)=C$. When $C$ is free, $(\varnothing,C)_{\cL}$ is therefore the proper part of the Boolean lattice on $C$. Its order complex is the barycentric subdivision of the boundary of a $(|C|-1)$-simplex, so it has homotopy type $\mathbb{S}^{|C|-2}$. For $|C|=1$, this is $\mathbb{S}^{-1}=\{\varnothing\}$. If $C$ is not free, then $[\varnothing,C]_{\cL}$ is a non-Boolean meet-distributive lattice. By \cite[Corollary~2.2]{BilleraHsiaoProvan}, the order complex of its proper part is a polyhedral ball, hence is contractible.
\end{proof}

The result below concerns closure stable complexes. It shows that nonzero reduced homology in degree $q$ gives a free closed nonface of cardinality $q+2$.

\begin{theorem}\label{thm:2}
Let $(E,\cl)$ be a finite convex geometry and let $\mathcal K$ be a simplicial complex on $E$ that is closure stable with respect to $\cl$. If $\widetilde H_q(\mathcal K;\kk)\ne0$ for some $q\ge0$, then there exists a free closed set $B\subseteq E$ such that $B\notin\mathcal K$ and $|B|=q+2$.
\end{theorem}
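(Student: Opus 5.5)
The plan is to pass to the poset of closed faces via \cref{lem:2} and then compute the homology of the order complex of a lower ideal in the lattice of closed sets by adding the remaining elements one at a time. Put $\mathcal P=\cI\setminus\{\varnothing\}$. By \cref{lem:2}, $\widetilde H_q(\Delta(\mathcal P);\kk)\ne0$. Since $\mathcal K\ne\{\varnothing\}$ when $q\ge0$, the complex is nonvoid. Consider the full poset $\cL\setminus\{\varnothing,E\}$ and more generally the proper part $\overline\cL$ of $\cL$. The order complex of $\cL\setminus\{\varnothing\}$ is a cone with apex $E$, so it is contractible. The idea is to start from $\cI\setminus\{\varnothing\}$ and adjoin the closed sets $C\notin\mathcal K$ one by one in an order that refines inclusion (smaller cardinality first), until all of $\cL\setminus\{\varnothing\}$ is obtained.

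When a closed set $C$ is adjoined, all closed sets strictly below it are already present, while none strictly above it are present. The elements below $C$ are all nonempty closed sets properly contained in $C$. Hence the order complex changes by attaching a cone over the link $\Delta((\varnothing,C)_{\cL})$. The Mayer--Vietoris sequence, or equivalently the long exact sequence of the pair, gives
\[
\widetilde H_{i}(\text{new})\to\widetilde H_{i}(\text{new},\text{old})\cong\widetilde H_{i-1}(\Delta((\varnothing,C)_{\cL}))\to\widetilde H_{i-1}(\text{old})\to\widetilde H_{i-1}(\text{new}).
\]
By \cref{lem:3}, the link is contractible unless $C$ is free, in which case it is $\mathbb S^{|C|-2}$. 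So adjoining a nonfree $C$ does not change homology. Adjoining a free $C$ can kill homology only in degree $|C|-2$, namely the class of the attaching sphere; this case includes $|C|=1$ via the convention $\mathbb S^{-1}=\{\varnothing\}$. Adjoining a free $C$ can also create homology in degree $|C|-1$.

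Now track the nonzero class in degree $q$ of the initial complex. The final complex, with $E$ adjoined, is contractible, so $\widetilde H_q$ of the final complex vanishes. By the exact sequence, $\widetilde H_q(\text{old})\to\widetilde H_q(\text{new})$ is injective unless the adjoined $C$ is free with $|C|-2=q$. Indeed, its kernel is the image of $\widetilde H_{q}$ of the link, which is zero otherwise. Hence, along the chain of adjunctions, some step must have had a nonzero kernel in degree $q$. That step adjoined a free closed set $B$ with $|B|=q+2$. Since $B$ was adjoined, it was not in $\cI$, so $B\notin\mathcal K$. This gives the required witness.

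The main obstacle is making the "adjoin one element" step precise. One must check that the order complex of $\mathcal P\cup\{C\}$ equals $\Delta(\mathcal P)\cup\operatorname{cone}(\Delta(\mathcal P_{<C}))$ with intersection exactly $\Delta(\mathcal P_{<C})=\Delta((\varnothing,C)_{\cL})$. This requires that the enlarged set remain a lower ideal at every stage, which a linear extension of $\cL$ guarantees. One must also handle the degree-$(-1)$ conventions when $|C|=1$ and $q=0$. I expect only this bookkeeping to need care; the homological input is entirely \cref{lem:2} and \cref{lem:3}.
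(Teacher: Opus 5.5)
Your proposal is correct and matches the paper's proof essentially step for step. The paper also adjoins the closed sets outside $\cI$ in nondecreasing order of cardinality, attaching at each step a cone over $\Delta((\varnothing,C_j)_{\cL})$, and uses \cref{lem:3} with the long exact sequence of the pair to get injectivity on $\widetilde H_q$ unless a free closed set of size $q+2$ is added; the cone $\Delta(\cL\setminus\{\varnothing\})$ with apex $E$ then supplies the conclusion, which the paper phrases as a contradiction where you locate the step with nonzero kernel directly.
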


\begin{proof}
Let $\cL$ and $\cI$ be as in \cref{lem:2}. The lower ideal $\cI$ is proper. Otherwise $E\in\cI$, so $\Delta(\cI\setminus\{\varnothing\})$ would have the top element $E$ and would therefore be a cone, contradicting $\widetilde H_q(\mathcal K;\kk)\ne0$ and \cref{lem:2}. List the closed sets outside $\cI$ as $C_1,\dots,C_M$ in nondecreasing order of cardinality, breaking ties arbitrarily, and set $\mathcal P_0=\cI\setminus\{\varnothing\}$ and $\mathcal P_j=\mathcal P_{j-1}\cup\{C_j\}$. Every nonempty proper closed subset of $C_j$ already lies in $\mathcal P_{j-1}$. It either belongs to $\cI$, or it lies outside $\cI$ and has smaller cardinality. Moreover, no element of $\cI$ properly contains $C_j$, since $\cI$ is a lower ideal, and no previously added set outside $\cI$ can contain $C_j$ because it would have larger cardinality. Thus $C_j$ is maximal in $\mathcal P_j$. Since $C_j$ is the only new element, every new simplex of $\Delta(\mathcal P_j)$ contains the vertex $C_j$, and removing that vertex leaves a chain in $(\varnothing,C_j)_{\cL}$. Conversely, adjoining $C_j$ to any such chain gives a new simplex. If $L_j=\Delta((\varnothing,C_j)_{\cL})$, then $\Delta(\mathcal P_j)$ is obtained from $\Delta(\mathcal P_{j-1})$ by attaching the cone $C_j*L_j$, and the intersection of this cone with the old complex is exactly its base $L_j$. Therefore, by excision, equivalently by passing to the quotient by $\Delta(\mathcal P_{j-1})$,
\[
\widetilde H_t(\Delta(\mathcal P_j),\Delta(\mathcal P_{j-1});\kk)
\cong
\widetilde H_t(C_j*L_j,L_j;\kk).
\]
With the homology conventions above, the relative chain complex of $(C_j*L_j,L_j)$ is obtained from the reduced chain complex of $L_j$ by shifting degrees up by one. Hence
\[
\widetilde H_t(C_j*L_j,L_j;\kk)
\cong
\widetilde H_{t-1}(L_j;\kk).
\]
Hence
\[
\widetilde H_t(\Delta(\mathcal P_j),\Delta(\mathcal P_{j-1});\kk)
\cong
\widetilde H_{t-1}
\bigl(\Delta((\varnothing,C_j)_{\cL});\kk\bigr).
\]
By \cref{lem:3}, this group vanishes in every degree when $C_j$ is not free. If $C_j$ is free, then $L_j\simeq \mathbb{S}^{|C_j|-2}$, so its reduced homology is concentrated in degree $|C_j|-2$. Thus the relative group can be nonzero only when $t=|C_j|-1$. Suppose that no free closed set outside $\cI$ has cardinality $q+2$. By the argument above, the relative group in degree $q+1$ can be nonzero only when $C_j$ is free and $|C_j|-1=q+1$, that is, only when $|C_j|=q+2$. Hence the relative group in degree $q+1$ vanishes for every $j$. The long exact sequence of the pair $(\Delta(\mathcal P_j),\Delta(\mathcal P_{j-1}))$ contains
\[
\widetilde H_{q+1}
(\Delta(\mathcal P_j),\Delta(\mathcal P_{j-1});\kk)
\longrightarrow
\widetilde H_q(\Delta(\mathcal P_{j-1});\kk)
\longrightarrow
\widetilde H_q(\Delta(\mathcal P_j);\kk).
\]
Since the first group is zero, exactness makes the second map injective. By \cref{lem:2}, the initial $q$-th homology is nonzero. Composing these injections, this class survives in the $q$-th homology of $\Delta(\mathcal P_M)$. Every nonempty closed set is either already in $\cI$ or occurs among $C_1,\dots,C_M$, so $\mathcal P_M=\cL\setminus\{\varnothing\}$. This poset has the top element $E$, and therefore its order complex is a cone, whose reduced $q$-th homology vanishes, a contradiction. Thus there exists a free closed set $B\notin\cI$ with $|B|=q+2$. Since $B$ is closed, the definition of $\cI$ gives $B\notin\mathcal K$. Therefore $B$ is the required free closed nonface.
\end{proof}

\section{Co-chordal graphs and clique trees}\label{sec:4}

Throughout this section, $H=G^c$ is chordal. We first consider the case $r\le0$, where the degree complex reduces to a clique complex of a chordal graph.

\begin{lemma}\label[lemma]{lem:4}
Assume that $H=G^c$ is chordal, $r\le0$, and $\widetilde H_q(\Delta_{\mathbf a}(J);\kk)\ne0$. Then $q\in\{-1,0\}$ and
\[
|\mathbf a|+\ell\le2s-1.
\]
\end{lemma}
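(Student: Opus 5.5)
When $r\le0$, I will show that $\Delta_{\mathbf a}(J)$ is the link of $N$ in the clique complex of the chordal graph $H$. Its homology is then confined to degrees $-1$ and $0$, and the numerical bound follows from \cref{eq:6}.

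First, the admissibility condition \cref{eq:7} holds automatically for every maximal clique $Q\supseteq N$. The $b_j$ are nonnegative for $j\notin N$, so the left side of \cref{eq:7} is at least $0\ge r$. By \cref{prop:1} and the description of the link in its proof, this gives
\[
\Delta_{\mathbf a}(J)=\operatorname{lk}_{\operatorname{Cl}(H)}(N)=\operatorname{Cl}(H'),
\]
where $H'$ is the induced subgraph of $H$ on the common neighbours of $N$ outside $N$ (all vertices outside $N$ if $N=\varnothing$). If $\Delta_{\mathbf a}(J)$ were void, all its homology would vanish, so by hypothesis it is nonvoid and \cref{prop:1} applies. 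The graph $H'$ is an induced subgraph of a chordal graph, so it is chordal. The clique complex of a chordal graph has each connected component contractible; this is the standard fact that flag complexes of chordal graphs are collapsible componentwise, or it follows by induction on simplicial vertices, since deleting a simplicial vertex removes a cone over a simplex. Hence the reduced homology of $\Delta_{\mathbf a}(J)$ is concentrated in degree $-1$ (when $H'$ is empty, so the complex is $\{\varnothing\}$) or degree $0$ (several components). This proves $q\in\{-1,0\}$.

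For the bound, \cref{eq:6} gives $|\mathbf a|+\ell=W+q+1\le W+1$. Since $r=W-s+1\le0$, we have $W\le s-1$, so $|\mathbf a|+\ell\le s\le 2s-1$ because $s\ge1$.

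The main point needing care is the claim that every $b_j\ge0$. Coordinates outside $N$ are by definition nonnegative, so this holds, but it must be checked against the normalisation $a_i=-1$ for $i\in N$ made before \cref{eq:5}. That normalisation only affects coordinates in $N$, so it is harmless. The rest is routine.
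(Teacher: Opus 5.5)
Your proof is correct and follows the paper's argument essentially step for step: admissibility \cref{eq:7} holds automatically because $b_j\ge0\ge r$, so $\Delta_{\mathbf a}(J)$ is the link of $N$, namely $\operatorname{Cl}$ of the induced chordal subgraph on the common neighbours of $N$. Its components are contractible by simplicial-vertex collapses, which gives $q\in\{-1,0\}$, and then $W\le s-1$ together with \cref{eq:6} gives $|\mathbf a|+\ell\le s\le 2s-1$. The paper spells out that deleting a simplicial vertex keeps a component connected; in your version this follows because each such collapse is a homotopy equivalence.
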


\begin{proof}
Since the reduced homology is nonzero, $\Delta_{\mathbf a}(J)$ is nonvoid. Thus \cref{eq:4} implies that $N$ is contained in a maximal clique of $H$. If $Q\supseteq N$, then $\sum_{j\in Q\setminus N}b_j\ge0\ge r$, so every such maximal clique satisfies \cref{eq:7}. Hence $\Delta_{\mathbf a}(J)=\operatorname{lk}_{\operatorname{Cl}(H)}(N)$. Indeed, a set $F\subseteq[n]\setminus N$ lies in the complex generated by $Q\setminus N$ with $Q\supseteq N$ exactly when $F\cup N$ is contained in a maximal clique of $H$. Since every clique of a finite graph is contained in a maximal clique, this is equivalent to $F\cup N$ being a clique of $H$, which is precisely the definition of the link. Let $L$ be the induced subgraph of $H$ on the vertices $v\notin N$ for which $N\cup\{v\}$ is a clique. Then $\operatorname{lk}_{\operatorname{Cl}(H)}(N)=\operatorname{Cl}(L)$. Since induced subgraphs of chordal graphs are chordal, $L$ is chordal. Each connected component of $\operatorname{Cl}(L)$ is contractible. To see this, let $L_0$ be a connected component and argue by induction on $|V(L_0)|$. The claim is clear when $L_0$ has one vertex. Otherwise, since $L_0$ is chordal, it has a simplicial vertex $v$. Its neighbor set $N_{L_0}(v)$ is a nonempty clique because $L_0$ is connected. Moreover, $L_0-v$ is connected. If a path in $L_0$ uses $v$, the two neighbors of $v$ on that path are adjacent and allow the path to bypass $v$. Thus $L_0-v$ is a connected chordal graph. The star of $v$ in $\operatorname{Cl}(L_0)$ is the simplex on $\{v\}\cup N_{L_0}(v)$, and its intersection with $\operatorname{Cl}(L_0-v)$ is the simplex on $N_{L_0}(v)$. Hence the star collapses onto this face while $\operatorname{Cl}(L_0-v)$ is fixed. Therefore $\operatorname{Cl}(L_0)$ collapses onto $\operatorname{Cl}(L_0-v)$, which is contractible by induction. Therefore $\operatorname{Cl}(L)$ has reduced homology only in degrees $-1$ and $0$, and hence $q\in\{-1,0\}$. Since $r=W-s+1\le0$, we have $W\le s-1$. By \cref{eq:6}, $|\mathbf a|+\ell=W+q+1\le s\le2s-1$, with the stronger bound $|\mathbf a|+\ell\le s-1$ when $q=-1$.
\end{proof}

We now introduce the clique tree model used for the case $r\ge1$. Throughout this section, we identify a tree or subtree with its vertex set whenever no confusion can arise. A clique tree of a connected chordal graph is a tree whose nodes are the maximal cliques and such that, for each vertex $v$ of the graph, the maximal cliques containing $v$ induce a connected subtree \cite{BlairPeyton}. This is equivalent to the usual subtree representation of chordal graphs \cite{Gavril}. If $H$ is disconnected, choose a clique tree in each component and join these trees by auxiliary edges so that their union is a tree. Since every vertex of $H$ belongs to one component, its occurrence set remains connected in the resulting tree.

Fix $N\subseteq[n]$ that is contained in a maximal clique, and let $T$ be such a tree on $\cQ(H)$. For $v\in[n]$, set
\[
T_v=\{t\in V(T):v\in Q_t\},
\]
where $Q_t\in\cQ(H)$ is the maximal clique labelling $t$. Each $T_v$ is a subtree. Define $T_N:=\bigcap_{v\in N}T_v$, with $T_N=T$ when $N=\varnothing$. The set $T_N$ is nonempty because a maximal clique containing $N$ gives a node belonging to every $T_v$, $v\in N$. As a nonempty intersection of subtrees of a tree, $T_N$ is itself a subtree.

For $j\notin N$, put $P_j=T_j\cap T_N$. Thus $P_j$ is empty or a subtree of $T_N$. Give $P_j$ weight $b_j$, and for $t\in V(T_N)$ define its load by
\[
\lambda(t)=\sum_{\substack{j\notin N\\ t\in P_j}}b_j.
\]
Since $t\in T_N$, we have $N\subseteq Q_t$. Moreover, for $j\notin N$, $t\in P_j$ if and only if $t\in T_j$, equivalently $j\in Q_t$. Hence $\lambda(t)=\sum_{j\in Q_t\setminus N}b_j$. By \cref{eq:7}, the selected nodes are
\[
A=\{t\in V(T_N):\lambda(t)\ge r\}.
\]

We use the following form of tree convexity.

\begin{lemma}\label[lemma]{lem:5}
Let $T$ be a finite tree and let $A\subseteq V(T)$. For $X\subseteq A$, set $\cl_A(X)=A\cap\conv_T(X)$, where, for $X\ne\varnothing$, $\conv_T(X)$ denotes the minimal subtree of $T$ containing $X$, and $\conv_T(\varnothing)=\varnothing$. Then $\cl_A$ is a convex geometry closure on $A$. If $F$ is a subtree of $T$ and $X\subseteq F\cap A$, then $\cl_A(X)\subseteq F\cap A$. Consequently, every simplicial complex generated by traces $F\cap A$ of subtrees is closure stable.
\end{lemma}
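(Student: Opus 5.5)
The plan is to first check that tree convexity $\conv_T$ is a convex geometry on $V(T)$. The statement for $\cl_A$ then follows from the restriction principle recorded at the start of \cref{sec:3}, since $\cl_A(X)=A\cap\conv_T(X)$ is exactly the restriction of $\conv_T$ to $A$.

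For $\conv_T$ itself, we verify the axioms in order.

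\textbf{Basic closure properties.} We have $\conv_T(\varnothing)=\varnothing$ by definition, and extensivity is immediate. Monotonicity holds because the minimal subtree containing $X$ is the intersection of all subtrees containing $X$: subtrees of a tree are closed under nonempty intersection, since the unique path between two vertices lies in any subtree containing both. Idempotence holds because $\conv_T(X)$ is already a subtree. It is convenient to use the explicit description that, for nonempty $X$, $\conv_T(X)$ is the union of the unique $T$-paths between pairs of elements of $X$.

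\textbf{Anti-exchange.} Let $x\ne y$ lie outside $C=\conv_T(X)$ and suppose $x\in\conv_T(X\cup\{y\})$.
\begin{itemize}
\item If $X=\varnothing$, then $x\in\conv_T(\{y\})=\{y\}$, which is impossible. So assume $X\neq\varnothing$.
\item Since $C$ is a subtree not containing $y$, there is a unique vertex $p\in C$ closest to $y$. Then $\conv_T(X\cup\{y\})=C\cup[p,y]$, where $[p,y]$ is the path from $p$ to $y$.
\item Since $x\notin C$, we get $x\in[p,y]\setminus\{p\}$, so $x$ lies strictly between $p$ and $y$ on this path.
\item Consequently $\conv_T(X\cup\{x\})=C\cup[p,x]$, and this set does not contain $y$, because $y$ lies beyond $x$ on the path from $p$.
\end{itemize}
This settles anti-exchange. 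The only point needing care is the gate (nearest point) property of subtrees used in the first bullet.

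\textbf{Second assertion.} Let $F$ be a subtree with $X\subseteq F\cap A$. If $X=\varnothing$, the claim is trivial. Otherwise $F$ is a subtree containing $X$, so minimality gives $\conv_T(X)\subseteq F$, and hence $\cl_A(X)\subseteq F\cap A$.

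\textbf{Consequence.} Let $\mathcal K=\langle F_1\cap A,\dots,F_m\cap A\rangle$ with each $F_i$ a subtree. If $\sigma\in\mathcal K$, then $\sigma\subseteq F_i\cap A$ for some $i$. The previous step gives $\cl_A(\sigma)\subseteq F_i\cap A$, so $\cl_A(\sigma)\in\mathcal K$, and $\mathcal K$ is closure stable. The main obstacle is the anti-exchange verification; everything else is formal.
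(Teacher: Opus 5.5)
Your proposal is correct and follows essentially the same route as the paper: you first show that $\conv_T$ is a convex geometry on $V(T)$, with anti-exchange obtained by describing $\conv_T(X\cup\{y\})$ as $C$ together with the path joining $y$ to $C$. You then pass to $A$ by the restriction principle of \cref{sec:3}, and deduce the trace and closure-stability statements from the minimality of $\conv_T(X)$. Your justification of monotonicity through intersection-closedness of subtrees, and your use of the nearest vertex $p$, only spell out details the paper leaves implicit.
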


\begin{proof}
By definition, $X\subseteq\conv_T(X)$, so the operator is extensive. If $X\subseteq Y$, then $\conv_T(Y)$ is a connected subtree containing $X$. The minimality of $\conv_T(X)$ therefore gives $\conv_T(X)\subseteq\conv_T(Y)$, so the operator is monotone. Finally, $\conv_T(X)$ is itself a connected subtree. Hence the minimal subtree containing $\conv_T(X)$ is $\conv_T(X)$ itself, and therefore $\conv_T\bigl(\conv_T(X)\bigr)=\conv_T(X)$. Thus the operator is idempotent. For anti-exchange, suppose first that $X=\varnothing$. If $x\ne y$, then $x\notin\conv_T(\{y\})=\{y\}$, so the assertion is immediate. Now let $X\ne\varnothing$, put $C=\conv_T(X)$, and suppose that distinct $x,y\notin C$ satisfy $x\in\conv_T(X\cup\{y\})$. Since $C$ is connected, $\conv_T(X\cup\{y\})$ is the union of $C$ and the unique path joining $y$ to $C$. Therefore $x$ lies on this path. Since $x\ne y$, the path joining $x$ to $C$ does not contain $y$, and hence $y\notin\conv_T(X\cup\{x\})$. Thus $\conv_T$ is a convex geometry closure operator. Since the restriction of a finite convex geometry is again a convex geometry, $\cl_A$ is also a convex geometry closure operator. Finally, let $X$ be a face of a complex generated by subtree traces. Then $X\subseteq F\cap A$ for some subtree $F$. Since $F$ is a connected subtree containing $X$, the minimality of $\conv_T(X)$ gives $\conv_T(X)\subseteq F$. Consequently, $\cl_A(X)=\conv_T(X)\cap A\subseteq F\cap A$. Thus $\cl_A(X)$ is again a face of the same trace, and every complex generated by such traces is closure stable.
\end{proof}

We apply this tree closure to $T_N$. Thus, on the selected node set $A\subseteq V(T_N)$, we use $\cl_A(X)=A\cap\conv_{T_N}(X)$ for $X\subseteq A$.

\begin{proposition}\label[proposition]{prop:2}
Assume that $H=G^c$ is chordal, $r\ge1$, and that $\Delta_{\mathbf a}(J)$ is nonvoid. Then
\[
\Delta_{\mathbf a}(J)\simeq
\mathcal K_A:=\left\langle P_j\cap A:j\notin N\right\rangle,
\]
where $\mathcal K_A$ is a simplicial complex on the selected node set $A$.
\end{proposition}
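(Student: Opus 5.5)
The plan is to apply the nerve theorem to the cover of $\Delta_{\mathbf a}(J)$ by its generating simplices. By \cref{prop:1}, and since $\Delta_{\mathbf a}(J)$ is nonvoid, $\Delta_{\mathbf a}(J)$ is generated by the sets $Q\setminus N$, where $Q\in\cQ(H)$ contains $N$ and satisfies \cref{eq:7}. A maximal clique contains $N$ exactly when its node lies in $T_N$. For $t\in V(T_N)$ the left side of \cref{eq:7} is $\lambda(t)$. Hence
\[
\Delta_{\mathbf a}(J)=\bigl\langle \sigma_t : t\in A\bigr\rangle,\qquad \sigma_t:=Q_t\setminus N,
\]
where each $\sigma_t$ is regarded as the full simplex on its vertex set. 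Distinct nodes carry distinct maximal cliques, so the indexing by $A$ is faithful.

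First I will check the hypotheses of the nerve theorem. The cover must consist of nonempty simplices, and the index set must be nonempty. Since $r\ge1$, the inequality $\lambda(t)=\sum_{j\in Q_t\setminus N}b_j\ge r\ge1$ forces $Q_t\setminus N\neq\varnothing$ for every $t\in A$. So each $\sigma_t$ is a nonempty simplex, and in particular $\Delta_{\mathbf a}(J)\neq\{\varnothing\}$. Nonvoidness then forces $A\neq\varnothing$: if $A$ were empty, the generated complex would be void. Any nonempty intersection $\sigma_{t_1}\cap\dots\cap\sigma_{t_k}$ is the full simplex on $\bigcap_i(Q_{t_i}\setminus N)$, and is therefore contractible. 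The nerve theorem (for instance in Björner's form for covers by subcomplexes whose nonempty finite intersections are contractible) then gives $\Delta_{\mathbf a}(J)\simeq\mathcal N$. Here $\mathcal N$ is the nerve on $A$, whose faces are the nonempty $X\subseteq A$ with $\bigcap_{t\in X}\sigma_t\ne\varnothing$.

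Next I will identify $\mathcal N$ with $\mathcal K_A$. A nonempty $X\subseteq A$ is a face of $\mathcal N$ exactly when some $j\notin N$ lies in $Q_t$ for all $t\in X$. For nodes of $T_N$, the condition $j\in Q_t$ is equivalent to $t\in P_j$, as noted before the definition of $\lambda$. So $X$ is a face of $\mathcal N$ if and only if $X\subseteq P_j\cap A$ for some $j\notin N$, which is precisely the condition for $X$ to be a face of $\mathcal K_A$. Every $t\in A$ is a vertex of $\mathcal K_A$, since $\sigma_t\neq\varnothing$ gives some $j$ with $t\in P_j$. Hence $\mathcal K_A$ really is a complex on the vertex set $A$ and coincides with $\mathcal N$.

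The only delicate point is the bookkeeping of degenerate cases, and this is where I expect the main care to be needed. The nerve theorem must not be applied when some generator is the empty face or when the cover is empty; both cases are excluded precisely by $r\ge1$ and nonvoidness, as shown above. Apart from this, the argument is a direct translation between maximal cliques and nodes of the clique tree, using the characterization of $P_j$ in terms of the $Q_t$.
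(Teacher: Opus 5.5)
Your proposal is correct and follows the paper's proof essentially step for step: you cover $\Delta_{\mathbf a}(J)$ by the full simplices on $Q_t\setminus N$ for $t\in A$, and you use $r\ge1$ together with nonvoidness to get a nonempty cover whose members are geometrically nonempty and whose intersections are simplices. You then apply Bj\"orner's nerve theorem and identify the nerve with $\mathcal K_A$ via $j\in Q_t\iff t\in P_j$ for $t\in T_N$, exactly as the paper does.
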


\begin{proof}
For $t\in A$, let $U_t=2^{Q_t\setminus N}$, the full simplex on $Q_t\setminus N$. The selected nodes are exactly the maximal cliques that satisfy \cref{eq:7}, so \cref{prop:1} gives $\Delta_{\mathbf a}(J)=\bigcup_{t\in A}U_t$. Because the degree complex is nonvoid, $A\ne\varnothing$. If $t\in A$, then $\sum_{j\in Q_t\setminus N}b_j=\lambda(t)\ge r\ge1$. All $b_j$ are nonnegative, so $Q_t\setminus N\ne\varnothing$. Thus $\{U_t:t\in A\}$ is a nonempty cover by simplices with nonempty geometric realizations. For every nonempty $\sigma\subseteq A$,
\[
\bigcap_{t\in\sigma}U_t
=
2^{\,\bigcap_{t\in\sigma}(Q_t\setminus N)}.
\]
Hence every geometrically nonempty intersection is a simplex and therefore contractible. The nerve lemma \cite[Theorem~10.6]{Bjorner} gives $\Delta_{\mathbf a}(J)\simeq\operatorname{Nerve}\{U_t:t\in A\}$. A nonempty set $\sigma\subseteq A$ is a face of this nerve exactly when $\bigcap_{t\in\sigma}(Q_t\setminus N)\ne\varnothing$, equivalently, when there is $j\notin N$ with $j\in Q_t$ for every $t\in\sigma$. Since $\sigma\subseteq A\subseteq T_N$, the definition of $P_j$ shows that this is the same as $\sigma\subseteq P_j\cap A$ for some $j\notin N$. Thus the nerve is $\langle P_j\cap A:j\notin N\rangle=\mathcal K_A$.
\end{proof}

Empty $P_j$ may be omitted from the generating family, while each nonempty $P_j$ is a subtree of $T_N$. Thus \cref{lem:5} shows that the trace complex $\mathcal K_A$ is closure stable with respect to $\cl_A$.

For a nonempty finite tree $U$, let $\operatorname{Leaf}(U)$ denote the set of vertices of degree one, with the unique vertex of a singleton tree also regarded as a leaf. The next lemma identifies the extreme elements for tree convexity and connects the homological witness in \cref{thm:2} with the leaf sets used below.

\begin{lemma}\label[lemma]{lem:6}
Let $T$ be a finite tree, let $A\subseteq V(T)$, and define $\cl_A(X)=A\cap\conv_T(X)$ for $X\subseteq A$. For every nonempty $B\subseteq A$, $\ex_A(B)=\operatorname{Leaf}(\conv_T(B))$. Consequently, if $B$ is closed with respect to $\cl_A$, then $B$ is free if and only if $B=\operatorname{Leaf}(\conv_T(B))$.
\end{lemma}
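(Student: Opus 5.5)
Put $U=\conv_T(B)$. For $b\in B$, the definition of $\ex_A$ gives $b\in\ex_A(B)$ exactly when $b\notin\conv_T(B\setminus\{b\})$. So the whole lemma comes down to a purely tree-theoretic claim: for $b\in B$,
\[
b\in\conv_T(B\setminus\{b\})\iff b\text{ is not a leaf of }U.
\]
Here the singleton convention is built in: if $B=\{b\}$, then $U=\{b\}$, $b$ is a leaf by convention, and $b\notin\conv_T(\varnothing)=\varnothing$. Before proving the claim, I will show $\operatorname{Leaf}(U)\subseteq B$. Suppose $u\in U\setminus B$ is a leaf of $U$ and $|U|\ge2$. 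Then $U\setminus\{u\}$ is still a subtree containing $B$, which contradicts the minimality of $U$. Hence every leaf of $U$ lies in $B$, and this is what lets the equality $\ex_A(B)=\operatorname{Leaf}(U)$ be read as an equality of subsets of $B$.

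Next, assume $|B|\ge2$ and let $b\in B$. First suppose $b$ is a leaf of $U$. Then $U\setminus\{b\}$ is a subtree, and it contains $B\setminus\{b\}$. Therefore $\conv_T(B\setminus\{b\})\subseteq U\setminus\{b\}$, so $b$ is extreme.

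Now suppose $b$ is not a leaf, so $\deg_U(b)\ge2$. Removing $b$ splits $U$ into at least two components, and each component contains a leaf of $U$. To see this, take a vertex of the component at maximal distance from $b$ in $U$. Such a vertex has degree one in $U$, and $b$ is not a leaf, so it is distinct from $b$. By the previous paragraph these leaves lie in $B\setminus\{b\}$. Choose two of them, $x$ and $y$, in different components. The unique $x$–$y$ path in $T$ lies in $U$ and must pass through $b$. Hence $b\in\conv_T(\{x,y\})\subseteq\conv_T(B\setminus\{b\})$, so $b$ is not extreme. This proves $\ex_A(B)=\operatorname{Leaf}(U)$.

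For the consequence, recall that a closed set $B$ is free exactly when $\ex_A(B)=B$. By the equality just proved, this holds exactly when $B=\operatorname{Leaf}(\conv_T(B))$. The closedness hypothesis is only needed so that "free" is defined.

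The main subtle point is the step $\operatorname{Leaf}(U)\subseteq B$ together with the existence of a leaf in each component of $U-b$. Both rest on the minimality of the convex hull and on elementary tree facts, namely that every path between two vertices of $U$ stays inside the subtree $U$. Once these are in place, the rest is bookkeeping, with the degenerate cases $|B|=1$ and $|U|=1$ handled by the singleton-leaf convention.
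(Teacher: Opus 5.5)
Your proof is correct and follows the paper's argument essentially step for step. You reduce extremality to $b\notin\conv_T(B\setminus\{b\})$, obtain $\operatorname{Leaf}(U)\subseteq B$ from the minimality of $U$, delete a leaf to get a smaller subtree still containing $B\setminus\{b\}$, and place any non-leaf on the path between two leaves in different components of $U-b$ (the farthest-vertex argument simply makes explicit the paper's one-line claim that a non-leaf lies on such a path).
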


\begin{proof}
Set $U=\conv_T(B)$. Suppose first that $B=\{b\}$. Then $U=\{b\}$, so our convention gives $\operatorname{Leaf}(U)=\{b\}$. Moreover, $b\notin\cl_A(\varnothing)=\varnothing$, and hence $\ex_A(B)=\{b\}$. Thus the assertion holds in this case. Now assume that $|B|\ge2$. For every $b\in B$, since $b\in A$, we have $b\in\ex_A(B)$ if and only if $b\notin\cl_A(B\setminus\{b\})$, which is equivalent to $b\notin\conv_T(B\setminus\{b\})$. Every leaf of $U$ belongs to $B$. Indeed, if a leaf $\ell$ of $U$ did not belong to $B$, then $U\setminus\{\ell\}$ would be a connected subtree containing $B$, contradicting the minimality of $U=\conv_T(B)$. If $b\in B$ is not a leaf of $U$, then $b$ lies on the path joining two leaves of $U$. These leaves belong to $B\setminus\{b\}$, and therefore $b\in\conv_T(B\setminus\{b\})$. Thus $b$ is not extreme. Conversely, if $b$ is a leaf of $U$, then $U\setminus\{b\}$ is a connected subtree containing $B\setminus\{b\}$. Consequently, $\conv_T(B\setminus\{b\})\subseteq U\setminus\{b\}$, so $b\notin\conv_T(B\setminus\{b\})$. Hence $b$ is extreme. It follows that $\ex_A(B)=\operatorname{Leaf}(U)$. If $B$ is closed with respect to $\cl_A$, the final assertion follows because $B$ is free precisely when $\ex_A(B)=B$.
\end{proof}

We now convert a leaf witness into a numerical estimate by a degree count in a tree.

\begin{lemma}\label[lemma]{lem:7}
Let $U$ be a finite tree whose leaf set is $B$, with $|B|\ge2$. For $x\in V(U)\setminus B$, put $\mu(x)=\deg_U(x)-2$. If $R$ is a nonempty proper connected subtree of $U$ and $k(R)=|R\cap B|$, then
\begin{equation}\label{eq:8}
\sum_{x\in R\setminus B}\mu(x)
=k(R)+|\delta_U(R)|-2
\ge k(R)-1,
\end{equation}
where $\delta_U(R)$ is the set of edges with exactly one endpoint in $R$.
\end{lemma}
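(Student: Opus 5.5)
This is a degree count in the forest $R$. Since $R$ is a subtree on $|R|$ vertices, it has $|R|-1$ edges, so
\[
\sum_{x\in R}\deg_U(x)=2(|R|-1)+|\delta_U(R)|.
\]
Every vertex of $U$ has positive degree, because $|B|\ge2$ forces $|V(U)|\ge2$. Hence the leaves in $R$ have $U$-degree exactly one, which gives
\[
\sum_{x\in R\setminus B}\deg_U(x)=2|R|-2+|\delta_U(R)|-k(R).
\]
Now subtract $2|R\setminus B|=2|R|-2k(R)$. This yields
\[
\sum_{x\in R\setminus B}\mu(x)=k(R)+|\delta_U(R)|-2,
\]
which is the equality in \cref{eq:8}.

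The inequality reduces to $|\delta_U(R)|\ge1$. Here $R$ is nonempty and proper, and $U$ is connected, so some edge of $U$ joins a vertex of $R$ to a vertex outside $R$. Thus $|\delta_U(R)|\ge1$, and therefore $k(R)+|\delta_U(R)|-2\ge k(R)-1$.

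The only step needing care is the edge count. An edge of $U$ with both endpoints in $R$ is an edge of $R$ itself, because $R$ is an induced connected subgraph of a tree. Indeed, two vertices of $R$ joined by an edge of $U$ are also joined by a path in $R$, and uniqueness of paths in $U$ forces that edge to lie in $R$. So $\sum_{x\in R}\deg_U(x)$ counts each edge of $R$ twice and each edge of $\delta_U(R)$ once, which is exactly the first display.
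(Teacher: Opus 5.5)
Your proof is correct and follows the paper's argument: the same degree count $\sum_{x\in R}\deg_U(x)=2(|R|-1)+|\delta_U(R)|$, removal of the $k(R)$ leaves of degree one, and $|\delta_U(R)|\ge1$ from the connectivity of $U$. Your explicit check that every edge of $U$ joining two vertices of $R$ already lies in $R$ is a detail the paper leaves implicit.
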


\begin{proof}
Since $R$ is a tree, the handshaking lemma gives $\sum_{x\in R}\deg_R(x)=2(|R|-1)$. Each edge of $\delta_U(R)$ contributes one additional unit to $\sum_{x\in R}\deg_U(x)$, and hence $\sum_{x\in R}\deg_U(x)=2(|R|-1)+|\delta_U(R)|$. Subtracting $2|R|$ gives
\[
\sum_{x\in R}(\deg_U(x)-2)=|\delta_U(R)|-2.
\]
Each vertex of $R\cap B$ is a leaf of $U$ and contributes $-1$. Moving these $k(R)$ contributions to the right gives the equality in \cref{eq:8}. Since $R$ is nonempty and proper, $|\delta_U(R)|\ge1$, which gives the inequality.
\end{proof}

\begin{corollary}\label[corollary]{cor:1}
Let $T$ be a finite tree, let $A\subseteq V(T)$, and let $\mathcal K=\langle F\cap A:F\in\mathcal F\rangle$ for a finite family $\mathcal F$ of subtrees. If $\widetilde H_q(\mathcal K;\kk)\ne0$ for some $q\ge0$, then there is a subtree $U\subseteq T$ whose leaf set $B$ satisfies $|B|=q+2$ and $U\cap A=B$, and no $F\in\mathcal F$ contains all of $B$.
\end{corollary}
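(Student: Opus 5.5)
We will combine \cref{lem:5}, \cref{thm:2} and \cref{lem:6}. First, equip $A$ with the restricted tree closure $\cl_A(X)=A\cap\conv_T(X)$. By \cref{lem:5} this is a finite convex geometry, and since $\mathcal K$ is generated by traces $F\cap A$ of subtrees, $\mathcal K$ is closure stable with respect to $\cl_A$. Empty traces may be dropped from the generating family. \cref{thm:2}, applied to the convex geometry $(A,\cl_A)$, then gives a free closed set $B\subseteq A$ with $B\notin\mathcal K$ and $|B|=q+2\ge2$.

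Next, put $U=\conv_T(B)$. Since $B$ is nonempty and free, \cref{lem:6} gives $B=\ex_A(B)=\operatorname{Leaf}(U)$, so the leaf set of $U$ is $B$ and $|B|=q+2$. Closedness of $B$ gives $B=\cl_A(B)=A\cap\conv_T(B)=U\cap A$.

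Finally, suppose some $F\in\mathcal F$ contained $B$. Then $B\subseteq F\cap A$, so $B$ would be a face of $\mathcal K$, contradicting $B\notin\mathcal K$. Hence no member of $\mathcal F$ contains $B$.

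The only point needing care is that \cref{thm:2} and \cref{lem:6} are applied to the same closure on the ground set $A$ rather than $V(T)$. Here $\ex_A$ is computed in $(A,\cl_A)$, which is consistent with the earlier remark that extreme points are unchanged under restriction. There is also a degenerate case $A=\varnothing$, in which $\mathcal K$ has no homology in degrees $q\ge0$, so this case is vacuous. I expect no real obstacle: the corollary is essentially a translation of the abstract witness of \cref{thm:2} into tree language via \cref{lem:6}.
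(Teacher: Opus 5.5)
Your proposal is correct and follows the paper's proof essentially step for step: closure stability via \cref{lem:5}, a free closed nonface $B$ of size $q+2$ from \cref{thm:2}, then $U=\conv_T(B)$ with $U\cap A=B$ by closedness and $B=\operatorname{Leaf}(U)$ by \cref{lem:6}, and finally the observation that $B\subseteq F\cap A$ would make $B$ a face of $\mathcal K$. Your remarks on dropping empty traces and on the vacuous case $A=\varnothing$ are details the paper leaves implicit.
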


\begin{proof}
By \cref{lem:5}, the trace complex $\mathcal K$ is closure stable with respect to $\cl_A$, so \cref{thm:2} applies. Hence there is a free closed nonface $B\subseteq A$ with $|B|=q+2$. Set $U=\conv_T(B)$. Since $B$ is closed, $U\cap A=\cl_A(B)=B$. Since $B$ is free, \cref{lem:6} shows that $B$ is the leaf set of $U$. If some $F\in\mathcal F$ contained $B$, then $B\subseteq F\cap A$ would be a face of $\mathcal K$, contradicting the choice of $B$.
\end{proof}

\begin{theorem}\label{thm:3}
Let $T$ be a finite tree, let $P_1,\dots,P_h$ be nonempty subtrees with positive integer weights $w_1,\dots,w_h$, and set $\Omega=\sum_{j=1}^h w_j$ and $\lambda(t)=\sum_{1\le j\le h,\,t\in P_j}w_j$. For an integer $r\ge1$, define $A=\{t\in V(T):\lambda(t)\ge r\}$, and let $\mathcal K$ be a simplicial complex on $A$ that is closure stable with respect to $\cl_A$ and satisfies $P_j\cap A\in\mathcal K$ for all $j$. If $\widetilde H_q(\mathcal K;\kk)\ne0$ for some $q\ge0$, then
\[
\Omega\ge2r+q.
\]
\end{theorem}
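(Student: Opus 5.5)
The plan is to turn the homological hypothesis into a leaf configuration, as in \cref{cor:1}, and then double count the loads $\lambda$ over that configuration, weighting internal vertices by the excess degrees of \cref{lem:7}.

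\textbf{Step 1: the leaf witness.} By \cref{thm:2}, applied to the closure stable complex $\mathcal K$ and the convex geometry $(A,\cl_A)$ of \cref{lem:5}, there is a free closed set $B\subseteq A$ with $B\notin\mathcal K$ and $|B|=q+2\ge2$. Set $U=\conv_T(B)$.
\begin{itemize}
\item Since $B$ is closed, $U\cap A=B$.
\item Since $B$ is free, \cref{lem:6} shows that $B$ is exactly the leaf set of $U$.
\item Since $P_j\cap A\in\mathcal K$ and $\mathcal K$ is simplicial, no $P_j$ contains all of $B$.
\end{itemize}
Two consequences are used below. Every leaf $b\in B$ lies in $A$, so $\lambda(b)\ge r$. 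Every internal vertex $x\in U\setminus B$ lies outside $A$, so $\lambda(x)\le r-1$.

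\textbf{Step 2: double counting over the leaves and the internal vertices.} For each $j$ put $R_j=P_j\cap U$. This is empty or a subtree of $U$, because it is an intersection of subtrees. It is never all of $U$, since otherwise $B\subseteq P_j$. Let $k_j=|R_j\cap B|$.

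Summing loads over the leaves gives
\[
\sum_j w_jk_j=\sum_{b\in B}\lambda(b)\ge(q+2)r.
\]

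For the internal vertices, take $\mu(x)=\deg_U(x)-2$, which is nonnegative on $U\setminus B$. Applying the computation in the proof of \cref{lem:7} to $R=U$ (so $\delta_U(U)=\varnothing$) gives $\sum_{x\in U\setminus B}\mu(x)=|B|-2=q$. Since $\mu\ge0$ and $\lambda\le r-1$ on $U\setminus B$,
\[
\sum_{x\in U\setminus B}\mu(x)\lambda(x)\le(r-1)q.
\]

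Expanding the same sum by subtrees gives
\[
\sum_{x\in U\setminus B}\mu(x)\lambda(x)=\sum_j w_j\sum_{x\in R_j\setminus B}\mu(x).
\]
For nonempty $R_j$, which is proper in $U$, \cref{lem:7} bounds the inner sum below by $k_j-1$. For empty $R_j$ the inner sum is $0\ge-1=k_j-1$. Hence
\[
\sum_j w_j(k_j-1)\le(r-1)q.
\]

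\textbf{Step 3: combining.} The last inequality says $\sum_j w_jk_j-\Omega\le(r-1)q$. Together with the leaf bound from Step 2,
\[
\Omega\ge(q+2)r-(r-1)q=2r+q.
\]

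\textbf{Main obstacle.} The delicate point is checking that \cref{lem:7} really applies to each $R_j$. This needs two facts: the nonface property of $B$ forces every $R_j$ to be a proper subtree of $U$, and $R_j$ is connected, which holds because an intersection of subtrees is a subtree. A secondary point is that the weights $\mu$ must be nonnegative. This holds because internal vertices of a tree have degree at least $2$, and it is what allows the pointwise bound $\lambda\le r-1$ to be summed. The degenerate case $q=0$, where $U$ is a path and $\mu\equiv0$, is covered by the same computation.
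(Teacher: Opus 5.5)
Your proof is correct and follows the paper's argument essentially step for step: you use the same leaf witness $U=\conv_T(B)$ with leaf set $B$, apply \cref{lem:7} to each $R_j=P_j\cap U$ in the same way, and use the same double counting to get $(q+2)r\le\Omega+(r-1)q$. The one minor difference is that you apply \cref{thm:2} and \cref{lem:6} directly to the closure stable complex $\mathcal K$. The paper instead first shows that the facets of $\mathcal K$ are closed, so that $\mathcal K$ is generated by subtree traces, and then cites \cref{cor:1}; your shortcut is valid and slightly more direct.
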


\begin{proof}
Every facet $F$ of $\mathcal K$ is closed. Indeed, closure stability gives $\cl_A(F)\in\mathcal K$, while $F\subseteq\cl_A(F)$. Hence maximality of $F$ yields $\cl_A(F)=F$. Thus $F=A\cap\conv_T(F)$. Consequently, $\mathcal K$ is generated by traces of subtrees of $T$. Since $P_j\cap A\in\mathcal K$ for every $j$, we may include $P_1,\dots,P_h$ among these subtrees without changing $\mathcal K$. By \cref{cor:1}, there is a subtree $U\subseteq T$ whose leaf set $B$ satisfies $m:=|B|=q+2$ and $U\cap A=B$, and no $P_j$ contains all of $B$. Since $q\ge0$, we have $m\ge2$. Now every $b\in B$ belongs to $A$, so $\lambda(b)\ge r$, while every $x\in V(U)\setminus B$ lies outside $A$, and hence $\lambda(x)\le r-1$. Set $R_j=P_j\cap U$ and $k_j=|P_j\cap B|$. If $R_j$ is nonempty, it is connected as an intersection of subtrees, and it is proper because $P_j$ does not contain all leaves of $U$. Hence \cref{eq:8} gives $k_j\le1+\sum_{x\in R_j\setminus B}\mu(x)$. If $R_j$ is empty, then $k_j=0$, so the same inequality is immediate. Multiplying by $w_j$ and summing gives
\[
\sum_{j=1}^h w_jk_j
\le \Omega+
\sum_{j=1}^h w_j\sum_{x\in R_j\setminus B}\mu(x).
\]
We now identify both sums by double counting. Since $k_j=|P_j\cap B|$,
\[
\begin{aligned}
\sum_{b\in B}\lambda(b)
&=\sum_{b\in B}\sum_{\substack{1\le j\le h\\ b\in P_j}}w_j\\
&=\sum_{j=1}^h w_j|P_j\cap B|
 =\sum_{j=1}^h w_jk_j.
\end{aligned}
\]
Also, for $x\in V(U)$ we have $x\in R_j$ exactly when $x\in P_j$. Therefore
\[
\begin{aligned}
\sum_{j=1}^h w_j\sum_{x\in R_j\setminus B}\mu(x)
&=\sum_{x\in V(U)\setminus B}\mu(x)
  \sum_{\substack{1\le j\le h\\ x\in P_j}}w_j\\
&=\sum_{x\in V(U)\setminus B}\mu(x)\lambda(x).
\end{aligned}
\]
Since every $b\in B$ has load at least $r$, these identities give
\[
rm
\le \sum_{b\in B}\lambda(b)
=\sum_{j=1}^h w_jk_j
\le \Omega+
\sum_{x\in V(U)\setminus B}\mu(x)\lambda(x).
\]
Since $B$ is the full leaf set of $U$, every vertex $x\in V(U)\setminus B$ has degree at least two, and therefore $\mu(x)=\deg_U(x)-2\ge0$. Using $\lambda(x)\le r-1$ for $x\in V(U)\setminus B$, we obtain
\[
rm\le\Omega+(r-1)
\sum_{x\in V(U)\setminus B}\mu(x).
\]
Finally, the degree sum formula for a tree gives $\sum_{x\in V(U)}(\deg_U(x)-2)=-2$. Each of the $m$ leaves contributes $-1$, so $\sum_{x\in V(U)\setminus B}\mu(x)=m-2$. Therefore $rm\le\Omega+(r-1)(m-2)$, and hence $\Omega\ge2r+m-2=2r+q$.
\end{proof}

\begin{example}\label[example]{ex:1}
The bound in \cref{thm:3} is sharp for every $r\ge1$ and $q\ge0$. Put $m=q+2$, and let $T$ be the star with center $c$ and leaves $B=\{b_1,\dots,b_m\}$. Take $r$ copies of $\{b_1\}$, one copy of $\{b_j\}$ for each $2\le j\le m$, and $r-1$ copies of $T\setminus\{b_1\}$, all with unit weight. Then
\[
\Omega=r+(m-1)+(r-1)=2r+q.
\]
Moreover, $\lambda(b_j)=r$ for every $1\le j\le m$, while $\lambda(c)=r-1$. Hence the selected set in \cref{thm:3} is $A=B$. Choose the simplicial complex $\mathcal K = \left\langle (T\setminus\{b_j\})\cap A:1\le j\le m\right\rangle$. Since $A=B$,
\[
\mathcal K
=
\left\langle B\setminus\{b_j\}:1\le j\le m\right\rangle,
\]
which is the boundary complex of the simplex on $B$. Therefore $\mathcal K\simeq \mathbb{S}^{m-2}=\mathbb{S}^q$, and hence $\widetilde H_q(\mathcal K;\kk)\ne0$. The singleton traces are faces of $\mathcal K$, while $(T\setminus\{b_1\})\cap A=B\setminus\{b_1\}$ is a facet. Hence the trace of every weighted subtree is a face of $\mathcal K$. Finally, for every $X\subseteq B$, we have $\conv_T(X)\cap B=X$, so $\cl_A$ is the identity on $A=B$. Hence $\mathcal K$ is closure stable. Thus all the hypotheses of \cref{thm:3} are satisfied with $\Omega=2r+q$, proving that the bound is sharp.
\end{example}

\section{Proof of the main theorem and consequences}\label{sec:5}

\begin{proof}[Proof of \cref{thm:1}]
The lower bound is \cref{lem:1}. Put $J=I(G)^{(s)}$. It remains to prove $\reg S/J\le2s-1$. Let $H^\ell_{\m}(S/J)_{\mathbf a}\ne0$, and set $q=\ell-|N|-1$. By Takayama's formula \cref{eq:2}, $\widetilde H_q(\Delta_{\mathbf a}(J);\kk)\ne0$. In particular, the degree complex is nonvoid. Normalize the negative coordinates of $\mathbf a$ as above and use the parameters in \cref{eq:5}. This does not change $N$ or the degree complex. If $r\le0$, \cref{lem:4} gives $|\mathbf a|+\ell\le2s-1$. Assume $r\ge1$. Since reduced homology vanishes below degree $-1$, we have $q\ge-1$. If $q=-1$, nonzero reduced homology would force $\Delta_{\mathbf a}(J)=\{\varnothing\}$. This complex is nonvoid, so \cref{prop:1} gives a maximal clique $Q$ satisfying \cref{eq:7}. Hence $\sum_{j\in Q\setminus N} b_j\ge r\ge1$, so $Q\setminus N$ is nonempty and gives a nonempty face, a contradiction. Thus $q\ge0$. Now \cref{prop:2} applies and gives $\Delta_{\mathbf a}(J)\simeq\mathcal K_A$. Hence, by homotopy invariance of reduced homology,
\[
\widetilde H_q(\mathcal K_A;\kk)\cong\widetilde H_q(\Delta_{\mathbf a}(J);\kk)\ne0.
\]
Apply \cref{thm:3} to the nonempty subtrees $P_j$ with $b_j>0$, using weights $b_j$. This family is nonempty. Indeed, since $\Delta_{\mathbf a}(J)$ is nonvoid, \cref{prop:1} gives a maximal clique $Q$ satisfying \cref{eq:7}. Hence the corresponding node belongs to $A$, so $A\ne\varnothing$. Choose $t\in A$. Then $\lambda(t)\ge r\ge1$, so $t\in P_j$ for some $j\notin N$ with $b_j>0$. These weights are positive integers because $\mathbf a\in\ZZ^n$, and each corresponding trace $P_j\cap A$ is a face of $\mathcal K_A$ by definition. Empty subtrees and zero weights are omitted only from the weighted family. Their traces remain part of $\mathcal K_A$. Omitting them does not change any load and therefore does not change the selected set $A$. If $W_0$ denotes the sum of these weights, then $W_0=\Omega$, and \cref{thm:3} gives $W\ge W_0=\Omega\ge2r+q$. Since $r=W-s+1$ by \cref{eq:5}, it follows that $W+q\le2s-2$. Using \cref{eq:6}, $|\mathbf a|+\ell=W+q+1\le2s-1$. Thus every nonzero multigraded component of local cohomology satisfies this bound, and \cref{eq:3} gives $\reg S/J\le2s-1$. Since $J\ne0$ is a proper monomial ideal, the exact sequence
\[
0\longrightarrow J\longrightarrow S\longrightarrow S/J\longrightarrow0
\]
gives $\reg J=\reg S/J+1$. Hence $\reg J\le2s$, which together with \cref{lem:1} yields
\[
\reg I(G)^{(s)}=2s.
\]
\end{proof}

The ordinary powers satisfy the same formula by \cite[Theorem~3.2]{HHZ}. Hence
\[
\reg I(G)^{(s)}=\reg I(G)^s=2s
\qquad\text{for every }s\ge1,
\]
which proves Minh's conjecture for co-chordal graphs.

\begin{corollary}\label[corollary]{cor:2}
Let $G$ be a finite simple graph with at least one edge. The following conditions are equivalent.
\begin{enumerate}[label=\textup{(\roman*)}]
\item $G$ is co-chordal.
\item $I(G)^{(s)}$ has a degree resolution for every $s\ge1$.
\end{enumerate}
\end{corollary}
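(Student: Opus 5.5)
The implication (i)$\Rightarrow$(ii) comes straight from \cref{thm:1} and \cref{lem:1}. If $G$ is co-chordal, then \cref{lem:1} produces the minimal generator $x_u^sx_v^s$ of degree $2s$, so $\omega(I(G)^{(s)})\ge 2s$. Since $\beta_{0,d}\ne0$ forces $d\le\reg$, \cref{thm:1} gives $\omega(I(G)^{(s)})\le\reg I(G)^{(s)}=2s$. Hence $\reg I(G)^{(s)}=\omega(I(G)^{(s)})=2s$ for every $s\ge1$, which is the definition of a degree resolution.

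For (ii)$\Rightarrow$(i), I will use only the case $s=1$. By definition $I(G)^{(1)}=I(G)$. This ideal is generated in degree $2$, so $\omega(I(G))=2$, and (ii) gives $\reg I(G)=2$.

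An ideal generated in a single degree $d$ with regularity $d$ has a $d$-linear resolution. Indeed, $\beta_{i,j}\ne0$ forces $j\ge i+d$, because the minimal free resolution of an ideal generated in degree $d$ has $i$-th shifts at least $i+d$. The regularity bound gives $j\le i+d$. Thus $I(G)$ has a $2$-linear resolution, and Fr\"oberg's theorem \cite{Froberg} shows that $G^c$ is chordal, so $G$ is co-chordal.

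The only points needing care are the standard fact that shifts in a minimal resolution strictly increase, giving the lower bound $j\ge i+2$, and checking that the hypothesis ``at least one edge'' makes $I(G)$ nonzero so that $\omega$ is defined. I expect no real obstacle: the corollary is a repackaging of \cref{thm:1} combined with Fr\"oberg's characterization at $s=1$.
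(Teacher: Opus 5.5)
Your proposal is correct and follows essentially the same route as the paper. You use \cref{lem:1} with \cref{thm:1} for (i)$\Rightarrow$(ii), and the case $s=1$ with the linear-resolution criterion and Fr\"oberg's theorem for the converse; you also spell out why regularity $2$ forces a $2$-linear resolution, which the paper simply asserts.
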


\begin{proof}
If $G$ is co-chordal, \cref{lem:1} gives a minimal generator of degree $2s$, while \cref{thm:1} gives $\reg I(G)^{(s)}=2s$. Since every minimal generator has degree at most the regularity, $\omega(I(G)^{(s)})=2s=\reg I(G)^{(s)}$. Thus every symbolic power has a degree resolution. Conversely, apply the hypothesis with $s=1$. Since $I(G)$ is generated in degree two, a degree resolution gives $\reg I(G)=2$. An ideal generated in a single degree $d$ has a $d$-linear resolution exactly when its regularity is $d$. Hence $I(G)$ has a $2$-linear resolution, and Fr\"oberg's theorem \cite[Theorem~1]{Froberg} implies that $G$ is co-chordal.
\end{proof}

\begin{corollary}\label[corollary]{cor:3}
Let $G$ be a finite simple co-chordal graph with at least one edge, let $s\ge1$, and put $J=I(G)^{(s)}$. For every $d\ge2s$, the truncation $J_{\ge d}=\bigoplus_{k\ge d}J_k$ has a $d$-linear resolution.
\end{corollary}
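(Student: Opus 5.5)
We derive the corollary from \cref{thm:1} together with the standard fact that truncating at or above the regularity produces a linear resolution. Put $J=I(G)^{(s)}$ and fix $d\ge2s$.

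By \cref{thm:1}, $\reg J=2s\le d$. Moreover, \cref{lem:1} and the inequality $\omega(J)\le\reg J$ show that $J$ is generated in degrees at most $2s\le d$. Hence $J_{\ge d}$ is generated by $J_d$, so $J_{\ge d}=J_{\langle d\rangle}$ and its minimal generators all have degree $d$. It therefore suffices to show $\reg J_{\ge d}\le d$. Indeed, an ideal generated in a single degree $d$ with regularity $d$ has a $d$-linear resolution, since $\beta_{i,j}\ne0$ forces $j\ge i+d$.

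To bound the regularity, consider the short exact sequence
\[
0\longrightarrow J_{\ge d}\longrightarrow J\longrightarrow J/J_{\ge d}\longrightarrow0.
\]
The quotient $J/J_{\ge d}$ has finite length and is concentrated in degrees at most $d-1$. Its regularity is therefore at most $d-1$, because the regularity of a finite length module equals its top nonzero degree. From the long exact sequence of $\operatorname{Tor}$ we get
\[
\reg J_{\ge d}\le\max\{\reg J,\ \reg(J/J_{\ge d})+1\}\le\max\{2s,d\}=d.
\]
Alternatively, we can use the local cohomology description \cref{eq:3}. Truncation changes only $H^0_{\m}$ of $S/J$, and only in degrees below $d$, so the bound $a_i+i\le d-1$ for $S/J_{\ge d}$ follows.

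The only step needing care is the regularity estimate for the truncation. This is the standard result (for example, Eisenbud--Goto) that $J_{\ge d}$ has a linear resolution for all $d\ge\reg J$. We would prove it directly via the exact sequence above rather than cite it. In the $\operatorname{Tor}$ sequence, we check that $\operatorname{Tor}_i(J/J_{\ge d},\kk)$ lives in degrees at most $d-1+i+1$, using the Koszul complex. Combined with $\operatorname{Tor}_i(J,\kk)_j=0$ for $j>i+2s$, this gives $\operatorname{Tor}_i(J_{\ge d},\kk)_j=0$ for $j>i+d$, as required.
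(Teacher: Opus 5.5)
Your proof is correct and follows the paper's route. Both arguments deduce the statement from $\reg J=2s$ (\cref{thm:1}) together with the fact that truncating $J$ at any degree $d\ge\reg J$ yields a $d$-linear resolution; the paper cites this fact from \cite[Theorem~4.1]{AhmedFrobergNamiq}, while you prove it directly from $0\to J_{\ge d}\to J\to J/J_{\ge d}\to0$. (In your last paragraph, the group that maps to $\operatorname{Tor}_i(J_{\ge d},\kk)_j$ under the connecting map is $\operatorname{Tor}_{i+1}(J/J_{\ge d},\kk)_j$, which vanishes for $j>(d-1)+(i+1)=i+d$ by the Koszul complex. That is the bound you actually need, and it matches your displayed inequality.)
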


\begin{proof}
By \cref{thm:1}, $\reg J=2s$. Let $d\ge2s$. By \cite[Theorem~4.1]{AhmedFrobergNamiq}, the graded Betti numbers of $S/J_{\ge d}$ can occur only in degrees $j=i+d-1$ for $i>0$, since $d\ge\reg J$. Hence $J_{\ge d}$ has a $d$-linear resolution.
\end{proof}

We now prove \cref{thm:1.2} and identify the degrees not covered by it. For the remainder of this section, $H=G^c$ is chordal, $J=I(G)^{(s)}$, and, for $Q\subseteq[n]$, we write $\mathbf a(Q)=\sum_{i\in Q}a_i$.

\begin{proposition}\label[proposition]{prop:3}
Let $G$ be co-chordal, let $s\ge1$, and put $J=I(G)^{(s)}$. The ideal $J_{\langle s+1\rangle}$ is either zero or has linear quotients. In particular, whenever it is nonzero, it has an $(s+1)$-linear resolution.
\end{proposition}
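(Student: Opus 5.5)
The plan is to identify the generators of $J_{\langle s+1\rangle}$ explicitly and then exhibit a linear-quotient order coming from a perfect elimination ordering of the chordal graph $H=G^c$.

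\textbf{Step 1: describe $J_{s+1}$.} By \cref{eq:1}, a monomial $m=x^{\mathbf c}$ lies in $J$ if and only if $\sum_{i\notin Q}c_i\ge s$ for every $Q\in\cQ(H)$. If $\deg m=s+1$, this says $\mathbf c(Q)\le1$ for every maximal clique $Q$ of $H$.
\begin{itemize}
\item Every vertex lies in some maximal clique, so no exponent can be $\ge2$. Hence $m=x_F:=\prod_{i\in F}x_i$ is squarefree with $|F|=s+1$.
\item Every edge of $H$ lies in a maximal clique, so no two elements of $F$ are adjacent in $H$. Thus $F$ is an independent set of $H$, equivalently a clique of $G$.
\item Conversely, if $F$ is independent in $H$ with $|F|=s+1$, then each maximal clique meets $F$ in at most one vertex, so $x_F\in J$.
\end{itemize}
Therefore $J_{\langle s+1\rangle}=(x_F: F \text{ independent in } H,\ |F|=s+1)$. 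This ideal is zero exactly when $H$ has no independent set of size $s+1$. It remains to show that this squarefree ideal, generated in the single degree $s+1$, has linear quotients. The $(s+1)$-linear resolution then follows from the standard fact that an equigenerated ideal with linear quotients has a linear resolution.

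\textbf{Step 2: the order.} Fix a perfect elimination ordering $v_1,\dots,v_n$ of $H$, so that each $v_i$ is simplicial in $H[v_i,\dots,v_n]$; such an ordering exists because $H$ is chordal. Order the sets $F$ lexicographically by their sorted index lists. That is, $F'$ precedes $F$ if the smallest element of the symmetric difference lies in $F'$.

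\textbf{Step 3: the exchange property.} It suffices to prove the following. For $F'$ preceding $F$, there exist $u\in F'\setminus F$ and $w\in F$ such that $F'':=(F\setminus\{w\})\cup\{u\}$ is independent and precedes $F$. Indeed, $x_{F''}$ precedes $x_F$ and $(x_{F''}):x_F=(x_u)$ divides $(x_{F'}):x_F$, so the colon ideal by the earlier generators is generated by variables.

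To find the exchange, let $u$ be the smallest element of $F\triangle F'$, so $u\in F'\setminus F$. The elements of $F$ smaller than $u$ also lie in $F'$, hence are nonadjacent to $u$. The neighbours of $u$ among later vertices form a clique, because $u$ is simplicial in $H[v_u,\dots,v_n]$. Since $F$ is independent, at most one element $w\in F$ with $w>u$ is adjacent to $u$.
\begin{itemize}
\item If such a $w$ exists, remove it.
\item Otherwise, remove any element $w\in F$ with $w>u$. One exists because $|F|=|F'|$ and $F,F'$ agree below $u$, so $F$ must have an element beyond $u$.
\end{itemize}
In either case $F''$ is independent. It agrees with $F$ below $u$ and contains $u\notin F$, so $F''$ precedes $F$.

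\textbf{Main obstacle.} The only delicate point is the direction of the order and the simplicial property. The elimination ordering must be oriented so that the vertex $u$ added to $F''$ is simplicial with respect to the \emph{later} vertices. That orientation is what limits the conflicts of $u$ inside $F$ to a single vertex, and the exchange must remove a vertex after $u$ so that $F''$ stays earlier in the order. Once this orientation is fixed, the remaining verifications are routine.
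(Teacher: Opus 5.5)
Your proposal is correct and follows the paper's proof essentially step for step. You use the same identification of $J_{\langle s+1\rangle}$ with the squarefree monomials on $(s+1)$-element independent sets of $H$, the same lexicographic order induced by a perfect elimination ordering, and the same exchange: add the first differing vertex $u$ and delete its unique later $H$-neighbour in $F$, if there is one. The only cosmetic difference is the case with no such neighbour. There you delete an arbitrary element of $F$ beyond $u$, whereas the paper deletes an element of $F\setminus F'$; both choices work.
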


\begin{proof}
Let $H=G^c$ and let $\mathbf a\in\NN^n$ satisfy $|\mathbf a|=s+1$. By \cref{eq:1}, $x^{\mathbf a}\in J$ if and only if $\sum_{i\notin Q}a_i\ge s$ for every $Q\in\cQ(H)$. Since $|\mathbf a|=s+1$, this is equivalent to $\mathbf a(Q)\le1$ for every maximal clique $Q$ of $H$. Every vertex belongs to a maximal clique, so $a_i\le1$ for every $i$. Thus every monomial of $J_{\langle s+1\rangle}$ is squarefree, and its support meets each clique of $H$ in at most one vertex. Equivalently,
\begin{equation}\label{eq:9}
J_{\langle s+1\rangle}
=
\bigl(x_A:A\subseteq[n],\ |A|=s+1,\ A\text{ is independent in }H\bigr).
\end{equation}

Choose a perfect elimination ordering $1,\ldots,n$ of $H$ and order the generators in \cref{eq:9} decreasingly in the lexicographic order induced by $x_1>\cdots>x_n$. Let $x_A>x_B$ be two generators, and let $i$ be the first index at which the characteristic vectors of $A$ and $B$ differ. Then $i\in A\setminus B$. If $k<i$ and $k\in B$, then $k\in A$, so $k$ is not adjacent to $i$ in $H$. The later neighbors of $i$ form a clique, while $B$ is independent, so $B$ contains at most one later neighbor of $i$. If such a neighbor exists, call it $j$. It necessarily belongs to $B\setminus A$. If it does not exist, choose any $j\in B\setminus A$. In either case $C=(B\setminus\{j\})\cup\{i\}$ is independent in $H$. Also $j>i$, so $x_C>x_B$, $(x_C:x_B)=x_i$, and $x_i\mid(x_A:x_B)$. This is the linear quotient criterion for the above lexicographic ordering. Since all generators have degree $s+1$, the resolution is $(s+1)$-linear.
\end{proof}

\begin{proof}[Proof of \cref{thm:1.2}]
Suppose that $x^{\mathbf a}\in J$ has degree $d$. Since $J$ is proper, $d>0$. Choose $i$ with $a_i>0$, and let $Q$ be a maximal clique of $H$ containing $i$. By \cref{eq:1}, $s\le \sum_{j\notin Q}a_j=d-\mathbf a(Q)$, while $\mathbf a(Q)\ge a_i\ge1$. Hence $d\ge s+1$, proving that $J_{\langle d\rangle}=0$ for $d\le s$.

Part~\textup{(ii)} is \cref{prop:3}. If $d\ge2s$, then \cref{cor:3} shows that $J_{\ge d}$ has a $d$-linear resolution and is therefore generated by its degree-$d$ component. Consequently, $J_{\langle d\rangle}=J_{\ge d}$, which proves~\textup{(iii)}. The only degrees not covered are therefore $s+2\le d\le2s-1$, and this interval is nonempty only when $s\ge3$.
\end{proof}

For $s=2$, Theorem~\ref{thm:1.2} recovers the componentwise linearity of $I(G)^{(2)}$, which was proved with linear quotients by Ficarra, Moradi, and R\"omer \cite[Theorem~3.5]{FMR}. The next result shows that componentwise linearity need not hold for higher symbolic powers.

\begin{proposition}\label[proposition]{prop:4}
For every integer $s\ge4$, there is a co-chordal graph $G$ such that $I(G)^{(s)}$ is not componentwise linear. More precisely, the ideal generated by the degree $s+2$ component of $I(G)^{(s)}$ has regularity at least $s+3$.
\end{proposition}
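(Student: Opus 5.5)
The plan is to take $H$ to be the $s$-sun and $G=H^c$, and to restrict the ideal $L:=J_{\langle s+2\rangle}$, $J=I(G)^{(s)}$, to one well-chosen squarefree multidegree. There the problem becomes a statement about the edge ideal of the complement of a cycle, where Fr\"oberg's theorem supplies the nonlinearity.

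\emph{Setup.} Label the $s$-sun with central clique $C=\{c_1,\dots,c_s\}$ and outer vertices $u_1,\dots,u_s$, where $u_i$ is adjacent exactly to $c_i$ and $c_{i+1}$ (indices mod $s$). This graph is chordal, so $G$ is co-chordal. Its maximal cliques are $C$ and the triangles $T_i=\{u_i,c_i,c_{i+1}\}$. As in the proof of \cref{prop:3}, \cref{eq:1} shows the following: for $|\mathbf a|=s+2$ we have $x^{\mathbf a}\in J$ if and only if $\mathbf a(Q)\le 2$ for every maximal clique $Q$ of $H$. This describes the generators of $L$ explicitly.

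\emph{Restriction.} Let $\mathbf b$ be the squarefree multidegree of $x_{u_1}\cdots x_{u_s}x_{c_1}\cdots x_{c_s}$. I will use the standard restriction lemma. For a monomial ideal $L$, let $L_{\le\mathbf b}$ be generated by the minimal generators dividing $x^{\mathbf b}$. Then $\beta_{i,\mathbf c}(L_{\le\mathbf b})=\beta_{i,\mathbf c}(L)$ for all $\mathbf c\le\mathbf b$, so $\reg L\ge\reg L_{\le\mathbf b}$. This can be justified via the upper Koszul simplicial complex, or via the lcm lattice.

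Now determine the squarefree generators $x_F$ with $F\subseteq U\cup C$ and $|F|=s+2$. The condition $\mathbf a(C)\le2$ forces $|F\cap C|\le2$. Since $|F\cap U|\le s$, this gives $|F\cap C|=2$ and $U\subseteq F$. With every $u_i$ present, the conditions $\mathbf a(T_i)\le 2$ say exactly that $F\cap C$ is not a pair $\{c_i,c_{i+1}\}$. Hence
\[
L_{\le\mathbf b}=x_{u_1}\cdots x_{u_s}\cdot I(\overline{C_s}),
\]
where $\overline{C_s}$ denotes the complement of the $s$-cycle on $c_1,\dots,c_s$. This ideal is a shift by degree $s$ of $I(\overline{C_s})$, which lives in different variables. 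So its regularity is $s+\reg I(\overline{C_s})$.

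\emph{Conclusion and main obstacle.} For $s\ge4$ the complement of $\overline{C_s}$ is $C_s$, which is not chordal. By Fr\"oberg's theorem $I(\overline{C_s})$ has no $2$-linear resolution, so its regularity is at least $3$. For $s=4$, $\overline{C_4}=2K_2$, whose edge ideal is a complete intersection of regularity $3$. Therefore $\reg L\ge s+3$, so $L$ has no $(s+2)$-linear resolution and $J$ is not componentwise linear.

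The main point needing care is the restriction step. I must check that $L_{\le \mathbf b}$ is generated exactly by the minimal generators of $L$ dividing $x^{\mathbf b}$, and that these are precisely the squarefree ones computed above. Nonsquarefree generators of $L$ cannot divide the squarefree monomial $x^{\mathbf b}$, so they do not interfere. I also need to cite the restriction lemma for multigraded Betti numbers in a form valid over any field.
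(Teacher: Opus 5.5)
Your argument is correct and uses the same example as the paper (the complement of the $s$-sun, with $L=J_{\langle s+2\rangle}$). However, it detects the nonlinearity by a genuinely different mechanism.

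The paper works on the local cohomology side. It fixes the multidegree $\boldsymbol\alpha$ given by the outer vertices and obtains $\Delta_{\boldsymbol\alpha}(J)=\langle Q_1,\dots,Q_s\rangle$ from \cref{prop:1}. It then checks face by face that replacing $J$ by $L$ does not change this degree complex, collapses the complex onto the cycle $C_s$, and concludes $H^2_{\m}(S/L)_{\boldsymbol\alpha}\ne0$ by Takayama's formula.

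You work on the Betti number side. The restriction $L_{\le\mathbf b}$ to the squarefree multidegree of all variables is literally $x_{u_1}\cdots x_{u_s}\,I(\overline{C_s})$, and Fr\"oberg's theorem then finishes the job.

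Your route buys two things:
\begin{enumerate}[label=\textup{(\alph*)}]
\item It makes transparent why $s\ge4$ is needed: that is exactly when $C_s$ fails to be chordal.
\item It avoids the comparison of $\Delta_{\boldsymbol\alpha}(J)$ with $\Delta_{\boldsymbol\alpha}(L)$.
\end{enumerate}
The cost is importing the restriction lemma. For the inequality $\reg L\ge\reg L_{\le\mathbf b}$ you should also state explicitly that every Betti multidegree of $L_{\le\mathbf b}$ is at most $\mathbf b$ (for instance by the Taylor resolution). This is standard and field independent.

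The paper's route, in turn, stays inside the Takayama framework developed for \cref{thm:1} and exhibits an explicit nonzero local cohomology component.

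The two witnesses are the same cycle seen from opposite sides. By Hochster's formula, yours is $\beta_{s-3,\mathbf 1}(L)\ne0$, coming from $\widetilde H_1(C_s;\kk)$. That is precisely the homology the paper finds in $\Delta_{\boldsymbol\alpha}(L)$.
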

\begin{proof}
Fix $s\ge4$. Let $H_s$ be the $s$-sun on $A=\{a_1,\ldots,a_s\}$ and $B=\{b_1,\ldots,b_s\}$, where $A$ is a clique, $B$ is independent, and $b_i$ is adjacent precisely to $a_i$ and $a_{i+1}$, with indices taken modulo $s$. Let $C_s$ be the cycle on $A$ with edges $\{a_i,a_{i+1}\}$ for $1\le i\le s$. Put $G=H_s^c$, $J=I(G)^{(s)}$, and $L=J_{\langle s+2\rangle}$. Since $H_s$ is chordal, $G$ is co-chordal. The maximal cliques of $H_s$ are $A$ and $Q_i=\{b_i,a_i,a_{i+1}\}$ for $1\le i\le s$.

Let $\boldsymbol{\alpha}\in\NN^{2s}$ be the characteristic vector of $B$, with coordinates indexed by $A\cup B$. Thus $\alpha_{a_i}=0$ and $\alpha_{b_i}=1$ for $1\le i\le s$, so $x^{\boldsymbol{\alpha}}=\prod_{i=1}^s x_{b_i}$. Then $|\boldsymbol{\alpha}|=s$ and $N(\boldsymbol{\alpha})=\varnothing$. Moreover,
\[
\sum_{v\notin A}\alpha_v=s,
\qquad
\sum_{v\notin Q_i}\alpha_v=s-1
\quad\text{for }1\le i\le s.
\]
By \cref{eq:1}, $x^{\boldsymbol{\alpha}}\notin J$, so $\Delta_{\boldsymbol{\alpha}}(J)$ is nonvoid. Hence \cref{prop:1} gives
\[
\Delta_{\boldsymbol{\alpha}}(J)=\langle Q_1,\ldots,Q_s\rangle.
\]
Since $L\subseteq J$, we have $\Delta_{\boldsymbol{\alpha}}(J)\subseteq\Delta_{\boldsymbol{\alpha}}(L)$. For the reverse inclusion, let $F$ be a set not contained in any $Q_i$. Then $F$ contains two distinct vertices $u,v$ that do not lie together in any $Q_i$. Indeed, if every two vertices of $F$ lie together in some $Q_i$ and $b_i\in F$, then $F\subseteq Q_i$, because $b_i$ belongs only to $Q_i$. If $F\subseteq A$, then every two vertices of $F$ are consecutive on $C_s$. Since $s\ge4$, $C_s$ is triangle free, so $|F|\le2$, and hence $F$ is contained in some $Q_i$.

Since no $Q_i$ contains both $u$ and $v$, the monomial $x^{\boldsymbol{\alpha}}x_ux_v$ has degree $s+2$, and its exponent sum on every maximal clique of $H_s$ is at most two. Equivalently, its exponent sum outside every maximal clique is at least $s$, so \cref{eq:1} gives $x^{\boldsymbol{\alpha}}x_ux_v\in J$. Hence $x^{\boldsymbol{\alpha}}x_ux_v\in L$, and since $u,v\in F$,
\[
x^{\boldsymbol{\alpha}}
=\frac{x^{\boldsymbol{\alpha}}x_ux_v}{x_ux_v}
\in LS_F.
\]
Thus $F\notin\Delta_{\boldsymbol{\alpha}}(L)$, and therefore $\Delta_{\boldsymbol{\alpha}}(L)=\langle Q_1,\ldots,Q_s\rangle$.

For each $i$, collapse $Q_i$ along the free edge $\{a_i,b_i\}$, and then collapse the remaining edge $\{b_i,a_{i+1}\}$ along the free vertex $b_i$. These collapses leave precisely $C_s$, so $\Delta_{\boldsymbol{\alpha}}(L)\simeq C_s$. Hence
\[
\widetilde H_1(\Delta_{\boldsymbol{\alpha}}(L);\kk)\cong\kk.
\]
Since $N(\boldsymbol{\alpha})=\varnothing$, \cref{eq:2} gives $H^2_{\m}(S/L)_{\boldsymbol{\alpha}}\ne0$. Therefore \cref{eq:3} yields
\[
\reg S/L\ge|\boldsymbol{\alpha}|+2=s+2.
\]
As in the proof of \cref{thm:1}, $\reg L=\reg S/L+1$, and hence $\reg L\ge s+3$. Thus $L$ does not have an $(s+2)$-linear resolution, so $I(G)^{(s)}$ is not componentwise linear.
\end{proof}

\begin{remark}
Proposition~\ref{prop:4} disproves Conjecture~B of Ficarra, Moradi, and R\"omer \cite{FMR} and shows that symbolic powers of co-chordal edge ideals need not be componentwise linear. The smallest member of the family occurs for $s=4$. Since $H_4\cong H_4^c$, the graph in \cref{fig:4sun} represents the corresponding counterexample.

\begin{figure}[ht]
\centering
\begin{tikzpicture}[
scale=.9,
vertex/.style={circle,draw=black,fill=black,minimum size=5.5pt,inner sep=0pt},
edge/.style={line width=0.7pt}
]
\node[vertex] (a1) at (0,1.35) {};
\node[vertex] (a2) at (1.35,0) {};
\node[vertex] (a3) at (0,-1.35) {};
\node[vertex] (a4) at (-1.35,0) {};

\node[vertex] (b1) at (1.65,1.65) {};
\node[vertex] (b2) at (1.65,-1.65) {};
\node[vertex] (b3) at (-1.65,-1.65) {};
\node[vertex] (b4) at (-1.65,1.65) {};

\draw[edge]
(a1)--(a2)
(a2)--(a3)
(a3)--(a4)
(a4)--(a1)
(a1)--(a3)
(a2)--(a4);

\draw[edge]
(b1)--(a1) (b1)--(a2)
(b2)--(a2) (b2)--(a3)
(b3)--(a3) (b3)--(a4)
(b4)--(a4) (b4)--(a1);
\end{tikzpicture}

\caption{The $4$-sun graph $H_4$.}
\label{fig:4sun}
\end{figure}
\end{remark}

\section*{Declarations}

\subsection*{Data Availability}
No data were generated or analyzed in this study.

\subsection*{Competing Interests}
The authors declare no competing interests.

\end{document}